\newcounter{rowcntr}[table]
\renewcommand{\therowcntr}{\alph{rowcntr})}
\newcolumntype{N}{>{\refstepcounter{rowcntr}\therowcntr}c}
\newcommand{\logLogSlopeTriangle}[6]
{
    % #1. Relative offset in x direction.
    % #2. Width in x direction, so xA-xB.
    % #3. Relative offset in y direction.
    % #4. Slope d(y)/d(log10(x)).
    % #5. Plot options.
    % #6. Name of slope

    \pgfplotsextra
    {
        \pgfkeysgetvalue{/pgfplots/xmin}{\xmin}
        \pgfkeysgetvalue{/pgfplots/xmax}{\xmax}
        \pgfkeysgetvalue{/pgfplots/ymin}{\ymin}
        \pgfkeysgetvalue{/pgfplots/ymax}{\ymax}

        % Calculate auxilliary quantities, in relative sense.
        \pgfmathsetmacro{\xArel}{#1}
        \pgfmathsetmacro{\yArel}{#3}
        \pgfmathsetmacro{\xBrel}{#1-#2}
        \pgfmathsetmacro{\yBrel}{\yArel}
        \pgfmathsetmacro{\xCrel}{\xArel}
        %\pgfmathsetmacro{\yCrel}{ln(\yC/exp(\ymin))/ln(exp(\ymax)/exp(\ymin))} % REPLACE THIS EXPRESSION WITH AN EXPRESSION INDEPENDENT OF \yC TO PREVENT THE 'DIMENSION TOO LARGE' ERROR.

        \pgfmathsetmacro{\lnxB}{\xmin*(1-(#1-#2))+\xmax*(#1-#2)} % in [xmin,xmax].
        \pgfmathsetmacro{\lnxA}{\xmin*(1-#1)+\xmax*#1} % in [xmin,xmax].
        \pgfmathsetmacro{\lnyA}{\ymin*(1-#3)+\ymax*#3} % in [ymin,ymax].
        \pgfmathsetmacro{\lnyC}{\lnyA+#4*(\lnxA-\lnxB)}
        \pgfmathsetmacro{\yCrel}{\lnyC-\ymin)/(\ymax-\ymin)} % THE IMPROVED EXPRESSION WITHOUT 'DIMENSION TOO LARGE' ERROR.

        % Define coordinates for \draw. MIND THE 'rel axis cs' as opposed to the 'axis cs'.
        \coordinate (A) at (rel axis cs:\xArel,\yArel);
        \coordinate (B) at (rel axis cs:\xBrel,\yCrel);
        \coordinate (C) at (rel axis cs:\xCrel,\yCrel);

        % Draw slope triangle.
        \draw[#5]   (A)--
                    (B)-- 
                    (C)-- node[pos=0.5,anchor=west] {#6}
                    cycle;
    }
}
\newcommand{\logLogSlopeTriangleBelow}[6]
{
    % #1. Relative offset in x direction.
    % #2. Width in x direction, so xA-xB.
    % #3. Relative offset in y direction.
    % #4. Slope d(y)/d(log10(x)).
    % #5. Plot options.
    % #6. Name of slope

    \pgfplotsextra
    {
        \pgfkeysgetvalue{/pgfplots/xmin}{\xmin}
        \pgfkeysgetvalue{/pgfplots/xmax}{\xmax}
        \pgfkeysgetvalue{/pgfplots/ymin}{\ymin}
        \pgfkeysgetvalue{/pgfplots/ymax}{\ymax}

        % Calculate auxilliary quantities, in relative sense.
        \pgfmathsetmacro{\xArel}{#1}
        \pgfmathsetmacro{\yArel}{#3}
        \pgfmathsetmacro{\xBrel}{#1-#2}
        \pgfmathsetmacro{\yBrel}{\yArel}
        \pgfmathsetmacro{\xCrel}{\xArel}
        %\pgfmathsetmacro{\yCrel}{ln(\yC/exp(\ymin))/ln(exp(\ymax)/exp(\ymin))} % REPLACE THIS EXPRESSION WITH AN EXPRESSION INDEPENDENT OF \yC TO PREVENT THE 'DIMENSION TOO LARGE' ERROR.

        \pgfmathsetmacro{\lnxB}{\xmin*(1-(#1-#2))+\xmax*(#1-#2)} % in [xmin,xmax].
        \pgfmathsetmacro{\lnxA}{\xmin*(1-#1)+\xmax*#1} % in [xmin,xmax].
        \pgfmathsetmacro{\lnyA}{\ymin*(1-#3)+\ymax*#3} % in [ymin,ymax].
        \pgfmathsetmacro{\lnyC}{\lnyA+#4*(\lnxA-\lnxB)}
        \pgfmathsetmacro{\yCrel}{\lnyC-\ymin)/(\ymax-\ymin)} % THE IMPROVED EXPRESSION WITHOUT 'DIMENSION TOO LARGE' ERROR.

        % Define coordinates for \draw. MIND THE 'rel axis cs' as opposed to the 'axis cs'.
        \coordinate (A) at (rel axis cs:\xArel,\yArel);
        \coordinate (B) at (rel axis cs:\xBrel,\yCrel);
        \coordinate (C) at (rel axis cs:\xBrel,\yArel);

        % Draw slope triangle.
        \draw[#5]   (A)--
                    (B)-- node[pos=0.5,anchor=east] {#6}
                    (C)-- 
                    cycle;
    }
}
\date{}
\newtheorem{theorem}{Theorem}
\newtheorem{lemma}[theorem]{Lemma}
\newtheorem{cor}[theorem]{Corollary}
\theoremstyle{definition} 
\newtheorem{remark}[theorem]{Remark}
\newcommand{\vdual}[2]{(#1,#2)}
\newcommand{\norm}[3][]{#1\|#2#1\|_{#3}}
\newcommand{\wilde}{\widetilde}
\newcommand{\wat}{\widehat}
\def\div{{\rm div}}
\newcommand{\osc}{{\rm osc}}
\newcommand{\ind}{{\rm ind}}
\newcommand{\R}{\ensuremath{\mathbb{R}}}
\newcommand{\Jj}{\ensuremath{\mathcal{J}}}
\newcommand{\mM}{\ensuremath{\mathcal{M}}}
\newcommand{\tT}{\ensuremath{\mathcal{T}}}
\newcommand{\pP}{\ensuremath{\mathcal{P}}}
\newcommand{\sS}{\ensuremath{\mathcal{S}}}
\newcommand{\OO}{\ensuremath{\mathcal{O}}}
\newcommand{\pphi}{{\boldsymbol\phi}}
\newcommand{\ppsi}{{\boldsymbol\psi}}
\newcommand{\ssigma}{{\boldsymbol\sigma}}
\newcommand{\xxi}{{\boldsymbol\xi}}
\newcommand{\qq}{{\boldsymbol{q}}}
\title{Space-time finite element methods for parabolic distributed optimal control problems
\thanks{Supported by Conicyt Chile through projects FONDECYT 1210579 (MK) and 1210391 (TF).}}
\author{Thomas F\"uhrer\thanks{Facultad de Matem\'{a}ticas, Pontificia Universidad Cat\'{o}lica de Chile, Santiago, Chile,
email: \texttt{tofuhrer@mat.uc.cl}}
\and 
Michael Karkulik\thanks{Departamento de Matem\'atica, Universidad T\'ecnica Federico Santa Mar\'ia, Valpara\'iso, Chile,
email: \texttt{michael.karkulik@usm.cl}}}
\begin{document}
\maketitle
\begin{abstract}
  We present a method for the numerical approximation of distributed optimal control problems constrained by 
  parabolic partial differential equations. We complement the first-order optimality condition by
  a recently developed space-time variational formulation of parabolic equations which is coercive in
  the energy norm, and a Lagrangian multiplier. Our final formulation fulfills the Babu\v{s}ka-Brezzi conditions
  on the continuous as well as discrete level, without restrictions. Consequently, we can allow for
  final-time desired states, and obtain an a-posteriori error estimator which is efficient and reliable.
  Numerical experiments confirm our theoretical findings.

\bigskip
\noindent
{\em Key words}: \\
\noindent
{\em AMS Subject Classification}:
\end{abstract}
%--------------------------------------------------------------------------------------------------------
\section{Introduction}
%--------------------------------------------------------------------------------------------------------
The problem under consideration in this manuscript is the numerical approximation of distributed optimal control of parabolic partial differential equations.
Given two target functions $u_d\in L^2(Q)$, $u_{T,d}\in L^2(\Omega)$ and an initial condition $u_0\in L^2(\Omega)$,
our goal is to find a control $f\in L^2(Q)$ which minimizes
\begin{subequations}\label{eq:copt}
\begin{align}\label{eq:copt:opt}
  J(f) := \frac{\alpha}{2} \| u - u_d \|_{L^2(Q)}^2 + \frac{\beta}{2} \| u(T) - u_{T,d} \|_{L^2(\Omega)}^2 + \frac{\lambda}{2} \| f \|_{L^2(Q)}^2,
\end{align}
where the state $u$ is the solution of the state equation
\begin{align}\label{eq:copt:heat}
  \begin{split}
  \partial_t u -\Delta u &= f\text{ in } Q,\\
  u &= 0 \text{ on } J\times \partial\Omega,\\
  u(0) &= u_0 \text{ on } \Omega.
  \end{split}
\end{align}
\end{subequations}
Above, $\Omega$ is a Lipschitz domain in $\R^d$, $J=(0,T)$ a time interval, and $Q := J\times\Omega$ the space-time cylinder.
We assume that $\lambda>0$, $\alpha,\beta \geq 0$ and $\alpha+\beta>0$.
The well-posedness of the above problem can be found in the standard reference~\cite[Ch.~2, Sec.~1.2]{Lions_71}, and can be formulated
in terms of an adjoint state, which solves a heat equation backward in time (cf. Theorem~\ref{thm:copt}).
Numerical algorithms for the solution of~\eqref{eq:copt} are computationally expensive, cf.~\cite{GoetschelMinion_2019}: iterative
schemes require the repeated evaluation of the gradient of $J$ which in turn requires the solution of the forward and the backward heat equation,
giving rise to high cost in terms of computational time. Furthermore, the transient nature of the problem requires to store information about
the discretized variables on the whole space-time domain, giving rise to a high cost in terms of memory requirement.
Another approach is to solve the discretized optimality system at once using the first-order optimality condition, the state equation, and either a Lagrangian multiplier or the adjoint state.
A discretization based on this approach naturally entails increased computational cost due to the increased size of the system that has to be solved.
Independent of the method employed to solve the optimality system, problem~\eqref{eq:copt:heat} and its backward variant need to be solved.
The most common numerical methods for this separate problem use \textit{semidiscretizations}, which lead to time-stepping schemes, cf.~\cite{Thomee}.
The advantages of time-stepping schemes is that in every time step only one spatial problem has to be solved. Furthermore, for the separate solution of~\eqref{eq:copt:heat},
they allow for optimal storage requirements as long as the final state $u(T)$ is of interest only. As mentioned above, this advantage vanishes when
dealing with transient problems such as~\eqref{eq:copt}. Furthermore, the error analysis for time-stepping schemes is only of asymptotic nature, and consequently
no quasi-optimality results for the discretized optimal control problem can be obtained either. Also, time-stepping schemes require discretizations of tensor product type,
which impedes the use of space-time adapted meshes. For distributed optimal control problems~\eqref{eq:copt} without final time desired state (i.e., $\beta=0$),
numerical methods based on time-stepping schemes can be found in the more recent works~\cite{GongHZ_12,MeidnerV_07,MeidnerV_08} and the references therein,
as well as in the monographs~\cite{BorziS_12,Troeltzsch_10}.

In order to overcome the disadvantages of time-stepping schemes, simultaneous space-time discretizations have recently
been considered for the numerical approximation of parabolic equations such as~\eqref{eq:copt:heat}.
For example, the methods proposed in~\cite{Andreev_13, Andreev_14,StevensonW_19} rely basically on the standard
well-posed variational space-time formulation of parabolic equations, cf.~\cite[Ch.~XVIII, $\mathsection$ 3]{DautrayL_92}, see also~\cite[Ch.~5]{SchwabS_09}.
This formulation is of Petrov-Galerkin type, and consequently the uniform stability in the discretization parameter
for pairs of discrete trial- and test-spaces turns out to be a major obstacle in obtaining a robust and flexible discrete method.
Hence, the mentioned works obtain uniform stability for discrete spaces based on non-uniform but, still, global time steps.
Another approach from~\cite{Steinbach_15,DevaudS_18} already allows for general simplicial space-time meshes,
but is uniformly stable only with respect to a mesh-dependent norm which is weaker than the energy norm of the heat equation if the resolution of the discrete space
goes to zero, cf.~\cite[Remark~3.5]{StevensonW_19}. Hence, this approach is not suited to control the error in $\| \cdot\|_{L^2(\Omega)}$ evaluated at a specific time,
and consequently not suited either for final time desired states ($\beta\neq 0$) in~\eqref{eq:copt:opt}.
We refer to~\cite{LangerSTY_20}, where this approach is employed to solve problem~\eqref{eq:copt} with $\beta=0$. Furthermore, we refer
to~\cite{LangerSTY_21} with energy regularization in $L^2(H^{-1}(\Omega))$, and to the recent preprint~\cite{LangerSY_22}. Furthermore,
a space-time method for the problem~\eqref{eq:copt} based on wavelet-techniques is presented in~\cite{GunzburgerK_11}.

In~\cite{FuehrerK_21,GantnerS_21}, on the other hand, a least-squares formulation of parabolic equations is developed which is coercive in the energy norm of the heat equation,
and hence naturally stable for any choice of discrete spaces. We also refer to~\cite{DieningS_22} for space-time discretization in the discontinuous Petrov-Galerkin setting,
as well as~\cite{GantnerS_22} for a least-squares formulation of the instationary Stokes equation.
In the present paper, we propose to write the the optimal control problem~\eqref{eq:copt} as a variational formulation
by using the first-order optimality conditions, including the PDE-constraint~\eqref{eq:copt:heat} using the space-time least squares formulation from~\cite{FuehrerK_21,GantnerS_21},
complemented with a Lagrangian multiplier. We show that this saddle-point system fulfills the Babu\v{s}ka-Brezzi conditions on the continuous \textit{as well as} the discrete level,
without any restrictions on discrete spaces used. The use of least-squares techniques for the numerical solution of optimal control problem is treated already comprehensively
in~\cite[Sec.~11]{BochevG_09}, and the different approaches are compared with respect to their properties in~\cite[Table~11.1]{BochevG_09}. The method considered in the present
paper corresponds to approach no. $4$, which~\cite{BochevG_09} observe to be uniformly stable for any choice of discrete spaces, in accordance with our results. Furthermore,
the system matrix is symmetric and optimal error estimates can be obtained. This will also be stated explicitely in the present paper. We also obtain
an efficient and reliable a-posteriori error estimator which allows us to steer an adaptive algorithm.

%--------------------------------------------------------------------------------------------------------
\section{Variational formulation of the optimal control problem}
%--------------------------------------------------------------------------------------------------------
%--------------------------------------------------------------------------------------------------------
\subsection{Preliminaries}
%--------------------------------------------------------------------------------------------------------
For a bounded (spatial) Lipschitz domain $\Omega\subset\R^d$ we consider the standard Lebesgue and Sobolev spaces
$L^2(\Omega)$ and $H^k(\Omega)$ for $k\geq 1$ with the standard norms. The space $H^1_0(\Omega)$ consists of all $H^1(\Omega)$ functions
with vanishing trace on the boundary $\partial\Omega$.
We define $H^{-1}(\Omega):= H^1_0(\Omega)'$ as topological dual
with respect to the extended $L^2(\Omega)$ scalar product $\vdual{\cdot}{\cdot}_\Omega$.
For the time interval $J=(0,T)$ and a Banach space $X$ we will use the space $L^2(X)$ of functions
$f:J\rightarrow X$ which are strongly measurable with respect to the Lebesgue measure $ds$ on $\R$ and
\begin{align*}
  \| f \|_{L^2(X)}^2 := \int_J \| f(s) \|_X^2\,ds < \infty.
\end{align*}
A function $f\in L^2(X)$ is said to have a weak time-derivative $f'\in L^2(X)$, if
\begin{align*}
  \int_J f'(s)\cdot\varphi(s)\,ds = -\int_J f(s)\cdot \varphi'(s)\,ds\quad\text{ for all test functions }\varphi \in C^\infty_0(J).
\end{align*}
We then define the Sobolev-Bochner space $H^k(X)$ of functions in $L^2(X)$ whose weak derivatives $f^{(\alpha)}$ of all orders $|\alpha|\leq k$
exist, endowed with the norm
\begin{align*}
  \| f \|_{H^k(X)}^2 := \sum_{|\alpha|\leq k} \| f^{(\alpha)} \|_{L^2(X)}^2.
\end{align*}
%--------------------------------------------------------------------------------------------------------
If we denote by $C(\overline J;X)$ the space of continuous functions $f:\overline{J}\rightarrow X$
endowed with the natural norm, then we have the following time trace theorem,
\begin{align}\label{eq:timetrace}
  L^2(H^1_0(\Omega)) \cap H^1(H^{-1}(\Omega)) &\hookrightarrow C(\overline J; L^2(\Omega)),
\end{align}
cf.~\cite[Ch.~5.9.2, Thm.~3]{Evans}. If $\partial\Omega$ is smooth, there even holds for a nonnegative integer $m$
\begin{align}\label{eq:timetrace:2}
  L^2(H^{m+2}(\Omega)) \cap H^1(H^m(\Omega)) &\hookrightarrow C(\overline J;H^{m+1}(\Omega)),
\end{align}
cf.~\cite[Ch.~5.9.2, Thm.~4]{Evans}. The proof of this result in the last reference uses extension operators requiring $\partial\Omega$
to be smooth. This smoothness assumption can be circumvented by using Stein's extension operator, cf.~\cite[Ch.~2, Thm.~5]{Stein_70}.
If $u \in L^2(H^2(\Omega)\cap H^1_0(\Omega)) \cap H^1(L^2(\Omega))$, then we can apply the trace operator
$\gamma: H^1(\Omega)\rightarrow H^{1/2}(\partial\Omega)$ and see $0 = \gamma u \in L^2(H^{1/2}(\partial\Omega))$, and due to~\eqref{eq:timetrace:2}
also $\gamma u\in C(\overline J;H^{1/2}(\partial\Omega))$.
We conclude
\begin{align}\label{eq:timetrace:3}
  L^2(H^2(\Omega)\cap H^1_0(\Omega)) \cap H^1(L^2(\Omega)) &\hookrightarrow C(\overline J;H^1_0(\Omega)).
\end{align}
%--------------------------------------------------------------------------------------------------------
We will also need the following result. For a proof see, e.g.,~\cite[Lem.~2]{FuehrerK_21}.
\begin{lemma}\label{lem:schwarz}
  Let $L:X\rightarrow Y$ be a linear and bounded operator between two Banach spaces $X$, $Y$ and $u\in H^1(X)$. Then, $Lu \in H^1(Y)$ and $(Lu)' = L(u')$ holds.
\end{lemma}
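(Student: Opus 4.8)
The plan is to verify the two assertions in turn: first that $Lu$ (and likewise $Lu'$) lies in $L^2(Y)$, and then that $Lu'$ is the weak time-derivative of $Lu$. The only structural ingredient I rely on beyond the boundedness of $L$ is that a bounded linear operator commutes with the Bochner integral, that is, $L\bigl(\int_J g(s)\,ds\bigr) = \int_J Lg(s)\,ds$ for every Bochner-integrable $g\colon J\to X$. This is elementary for bounded $L$, since it holds trivially for simple functions and extends to general $g$ by continuity and the defining approximation of the Bochner integral.

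First I would check integrability. As $u$ is strongly measurable as an $X$-valued map and $L$ is continuous, the composition $Lu$ is strongly measurable as a $Y$-valued map: any sequence of simple functions converging to $u$ almost everywhere is mapped by $L$ to simple functions converging to $Lu$ almost everywhere. The boundedness of $L$ gives the pointwise estimate $\|Lu(s)\|_Y \le \|L\|\,\|u(s)\|_X$, whence
\[
  \| Lu \|_{L^2(Y)}^2 = \int_J \|Lu(s)\|_Y^2\,ds \le \|L\|^2 \int_J \|u(s)\|_X^2\,ds = \|L\|^2\,\|u\|_{L^2(X)}^2 < \infty,
\]
so that $Lu \in L^2(Y)$. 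The same argument applied to $u'\in L^2(X)$ shows $Lu'\in L^2(Y)$.

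Next I would transfer the defining weak-derivative identity for $u$ through $L$. By hypothesis, for every test function $\varphi\in C^\infty_0(J)$ the $X$-valued Bochner integrals satisfy $\int_J u'(s)\cdot\varphi(s)\,ds = -\int_J u(s)\cdot\varphi'(s)\,ds$. Applying $L$ to both sides, using that $L$ commutes with the Bochner integral and that $L(\varphi(s)\,x)=\varphi(s)\,Lx$ for the scalar factor $\varphi(s)$, I obtain
\[
  \int_J Lu'(s)\cdot\varphi(s)\,ds = -\int_J Lu(s)\cdot\varphi'(s)\,ds \qquad\text{for all } \varphi\in C^\infty_0(J).
\]
This is precisely the statement that $Lu$ admits the weak time-derivative $Lu'\in L^2(Y)$; hence $(Lu)' = Lu' = L(u')$, and since both $Lu$ and its weak derivative belong to $L^2(Y)$, we conclude $Lu\in H^1(Y)$.

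I do not anticipate a genuine obstacle: the core of the argument is simply pulling the bounded operator through the integral. The only point requiring a moment's care is the strong measurability of $Lu$ as a $Y$-valued function, which can be subtle for general (e.g.\ merely closed or unbounded) operators but is immediate here because $L$ is bounded; everything else is a direct consequence of the commutation of bounded operators with the Bochner integral.
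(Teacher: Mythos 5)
Your proof is correct and complete: strong measurability and the bound $\|Lu(s)\|_Y\le\|L\|\,\|u(s)\|_X$ give $Lu, L(u')\in L^2(Y)$, and commuting the bounded operator $L$ with the Bochner integrals in the defining identity of the weak derivative yields $(Lu)'=L(u')$. The paper itself gives no proof here, deferring to~\cite[Lem.~2]{FuehrerK_21}, and your argument is exactly the standard one used for that result, so the approaches coincide in essence.
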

We can then show the following result.
\begin{cor}\label{cor:schwarz}
  The following identity holds,
  \begin{align*}
    L^2(H^1_0(\Omega))\cap H^1(H^1(\Omega)) = H^1(H^1_0(\Omega)).
  \end{align*}
\end{cor}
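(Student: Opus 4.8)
The plan is to prove the two set inclusions separately, with the nontrivial content concentrated in a single application of Lemma~\ref{lem:schwarz} to the trace operator.

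The inclusion $H^1(H^1_0(\Omega)) \subseteq L^2(H^1_0(\Omega))\cap H^1(H^1(\Omega))$ is immediate. Indeed, the continuous embedding $H^1(X)\hookrightarrow L^2(X)$, valid for any Banach space $X$, applied with $X=H^1_0(\Omega)$ yields $H^1(H^1_0(\Omega))\subseteq L^2(H^1_0(\Omega))$, while the embedding $H^1_0(\Omega)\hookrightarrow H^1(\Omega)$ gives $H^1(H^1_0(\Omega))\subseteq H^1(H^1(\Omega))$. Intersecting the two establishes this direction.

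For the reverse inclusion, fix $u \in L^2(H^1_0(\Omega))\cap H^1(H^1(\Omega))$. Since $u \in L^2(H^1_0(\Omega))$ is already part of the hypothesis, and $u\in H^1(H^1(\Omega))$ gives $u'\in L^2(H^1(\Omega))$, it suffices to upgrade the latter to $u'\in L^2(H^1_0(\Omega))$: then $u$ and $u'$ both lie in $L^2(H^1_0(\Omega))$, which is exactly $u\in H^1(H^1_0(\Omega))$. Thus the whole argument reduces to showing that $u'(s)\in H^1_0(\Omega)$ for almost every $s\in J$.

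This is where Lemma~\ref{lem:schwarz} enters. I would apply it with the linear and bounded trace operator $\gamma\colon H^1(\Omega)\rightarrow H^{1/2}(\partial\Omega)$, taking $X=H^1(\Omega)$ and $Y=H^{1/2}(\partial\Omega)$: since $u\in H^1(H^1(\Omega))$, the lemma yields $\gamma u\in H^1(H^{1/2}(\partial\Omega))$ together with the commutation identity $(\gamma u)' = \gamma(u')$. Now I use $u\in L^2(H^1_0(\Omega))$: because $H^1_0(\Omega)=\ker\gamma$, we have $\gamma u(s)=0$ for almost every $s$, i.e.\ $\gamma u = 0$ as an element of $L^2(H^{1/2}(\partial\Omega))$. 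Its weak time derivative therefore vanishes, and the commutation identity forces $\gamma(u')=(\gamma u)'=0$, that is $\gamma(u'(s))=0$ for almost every $s$, hence $u'(s)\in\ker\gamma=H^1_0(\Omega)$ a.e., as required. The one genuine subtlety is the legitimacy of commuting the trace with the time derivative; this is precisely what Lemma~\ref{lem:schwarz} supplies, so no further estimate is needed. (Equality is understood at the level of sets; the two natural norms are readily seen to be equivalent.)
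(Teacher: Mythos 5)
Your proof is correct and follows essentially the same route as the paper: the nontrivial inclusion is handled by applying Lemma~\ref{lem:schwarz} to the spatial trace operator, using $\gamma u = 0$ to conclude $\gamma(u') = (\gamma u)' = 0$ and hence $u'(s)\in H^1_0(\Omega)$ a.e. The only cosmetic differences are your choice of codomain $H^{1/2}(\partial\Omega)$ instead of the paper's $L^2(\partial\Omega)$ for the trace, and your spelling out of the trivial inclusion, neither of which changes the argument.
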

\begin{proof}
  It suffices to show that $u\in L^2(H^1_0(\Omega))\cap H^1(H^1(\Omega))$ implies $u\in H^1(H^1_0(\Omega))$.
  To that end, denote by $\gamma:H^1(\Omega)\rightarrow L^2(\partial\Omega)$ the spatial trace operator.
  Then, $0=\gamma u \in L^2(L^2(\partial\Omega))$. According to Lemma~\ref{lem:schwarz}, $\gamma(u) \in H^1(L^2(\partial\Omega))$, and
  $\gamma(u') = \gamma(u)' = 0$, hence $u\in H^1(H^1_0(\Omega))$.
\end{proof}
%--------------------------------------------------------------------------------------------------------
The following regularity result is found in~\cite[Ch.~7.1, Thm.~5, Thm.~6]{Evans}.
\begin{lemma}\label{lem:par:reg}
  Suppose that
  \begin{align*}
    \partial_t v -\Delta v &= g\text{ in } Q,\\
    v &= 0 \text{ on } J\times \partial\Omega,\\
    v(0) &= v_0 \text{ on } \Omega.
  \end{align*}
  \begin{enumerate}
    \item If $g\in L^2(L^2(\Omega))$ and $v_0\in H^1_0(\Omega)$, then $u\in L^2(H^2(\Omega))\cap H^1(L^2(\Omega))\cap C(\overline J;H^1_0(\Omega))$.
    \item If $g\in L^2(H^2(\Omega))\cap H^1(L^2(\Omega))$, $v_0\in H^1_0(\Omega)\cap H^3(\Omega)$, and
      $g(0)-\Delta v_0\in H^1_0(\Omega)$, then
      $u \in L^2(H^4(\Omega))\cap H^1(H^2(\Omega)\cap H^1_0(\Omega))\cap H^2(L^2(\Omega))$.
  \end{enumerate}
\end{lemma}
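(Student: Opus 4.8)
The plan is to treat the two claims separately, reducing each to the cited parabolic-regularity results of~\cite{Evans} and then upgrading the temporal regularity by means of the time-trace embeddings already established above. For item~1 I would simply invoke \cite[Ch.~7.1, Thm.~5]{Evans}, which asserts precisely that $g\in L^2(L^2(\Omega))$ and $v_0\in H^1_0(\Omega)$ force $v\in L^2(H^2(\Omega))\cap H^1(L^2(\Omega))$. Since the lateral boundary condition gives $v(s)\in H^1_0(\Omega)$ for almost every $s\in J$, one in fact has $v\in L^2(H^2(\Omega)\cap H^1_0(\Omega))\cap H^1(L^2(\Omega))$, and then the continuity $v\in C(\overline J;H^1_0(\Omega))$ is not a separate regularity statement but a direct consequence of the embedding~\eqref{eq:timetrace:3} applied to this space.

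For item~2 I would proceed by a bootstrap in the time variable rather than quoting the higher-regularity theorem verbatim. Set $w:=v'$. Differentiating the state equation in time and using Lemma~\ref{lem:schwarz} to commute $\partial_t$ with $-\Delta$, one checks that $w$ solves the same heat equation with right-hand side $g'$ and initial datum $w(0)=g(0)-\Delta v_0$; differentiating the homogeneous lateral condition (again via Lemma~\ref{lem:schwarz}, cf. Corollary~\ref{cor:schwarz}) shows that $w$ also has vanishing spatial trace. The hypotheses are tailored exactly so that item~1 applies to $w$: indeed $g'\in L^2(L^2(\Omega))$ because $g\in H^1(L^2(\Omega))$, and the compatibility condition guarantees $w(0)=g(0)-\Delta v_0\in H^1_0(\Omega)$. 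Hence $v'=w\in L^2(H^2(\Omega)\cap H^1_0(\Omega))\cap H^1(L^2(\Omega))$, which already yields $v\in H^2(L^2(\Omega))$ and the membership $v'\in L^2(H^2(\Omega)\cap H^1_0(\Omega))$.

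It remains to produce the spatial regularity $v\in L^2(H^4(\Omega))$. Here I would rewrite the PDE pointwise in time as the elliptic problem $-\Delta v(s)=g(s)-v'(s)$ with $v(s)\in H^1_0(\Omega)$. By the previous step $v'\in L^2(H^2(\Omega))$, and by hypothesis $g\in L^2(H^2(\Omega))$, so the right-hand side lies in $L^2(H^2(\Omega))$; elliptic regularity (valid under the smoothness of $\partial\Omega$ implicit in the higher-order setting, consistent with~\eqref{eq:timetrace:2}) then yields $v(s)\in H^4(\Omega)$ for almost every $s$ together with a norm bound $\|v(s)\|_{H^4(\Omega)}\lesssim \|g(s)-v'(s)\|_{H^2(\Omega)}$ that is square-integrable over $J$. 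Squaring and integrating gives $v\in L^2(H^4(\Omega))$, and combining $v\in L^2(H^4(\Omega))\subset L^2(H^2(\Omega)\cap H^1_0(\Omega))$ with the component $v'\in L^2(H^2(\Omega)\cap H^1_0(\Omega))$ obtained above delivers $v\in H^1(H^2(\Omega)\cap H^1_0(\Omega))$, completing the claim.

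The main obstacle I anticipate is making the formal time-differentiation rigorous: one must justify that $w=v'$ is an admissible weak solution with the asserted initial value $g(0)-\Delta v_0$, which is exactly where the compatibility hypothesis is consumed, and one must ensure that the per-time elliptic estimate is uniform enough in $s$ to be integrated. Both points are standard in the linear parabolic theory and are the only places where the regularity of the data and of $\partial\Omega$ is genuinely used; everything else is bookkeeping that combines the cited theorems of~\cite{Evans} with the embedding~\eqref{eq:timetrace:3} and Lemma~\ref{lem:schwarz}.
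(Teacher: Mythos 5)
The paper gives no proof of this lemma at all: it is stated as a result imported verbatim from \cite[Ch.~7.1, Thm.~5, Thm.~6]{Evans}, so there is no argument in the paper to compare against line by line. Your handling of item~1 is exactly the paper's implicit reasoning: quote Evans' improved-regularity theorem and upgrade to $C(\overline J;H^1_0(\Omega))$ via the embedding~\eqref{eq:timetrace:3}. For item~2 you go a genuinely different (more self-contained) route: rather than quoting the higher-regularity theorem, you reconstruct its proof in the case $m=1$ --- differentiate in time, apply item~1 to $w=v'$, and recover $v\in L^2(H^4(\Omega))$ by pointwise-in-time elliptic regularity for $-\Delta v(s)=g(s)-v'(s)$. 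This is precisely the mechanism behind the cited theorem, so the outline is sound, and it has the virtue of showing exactly where each hypothesis is consumed. What the citation buys, and what your sketch defers, is the rigorous justification of the time differentiation: a priori $v'$ only lies in $L^2(H^{-1}(\Omega))$, so one cannot simply declare $w=v'$ to be a weak solution with initial value $v'(0)$; Evans carries this out at the level of Galerkin approximations, and that justification is the entire technical content of the theorem, not bookkeeping. (Note also a chicken-and-egg issue in your appeal to Corollary~\ref{cor:schwarz} for the vanishing trace of $w$: it presupposes $v'\in L^2(H^1(\Omega))$, which is part of what is being proved.)

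One concrete slip: from $\partial_t v-\Delta v=g$ one gets $v'(0)=g(0)+\Delta v_0$, \emph{not} $g(0)-\Delta v_0$, so your identification $w(0)=g(0)-\Delta v_0$ is false as an identity. The condition your bootstrap actually needs is $v'(0)=g(0)+\Delta v_0\in H^1_0(\Omega)$. The lemma's hypothesis is transcribed from Evans' convention $u_t+Lu=f$, where the first-order compatibility condition reads $f(0)-Lu_0\in H^1_0(\Omega)$; with $L=-\Delta$ this is $g(0)+\Delta v_0\in H^1_0(\Omega)$, i.e.\ the sign in the lemma's statement does not match the equation either. The two forms coincide only when $\Delta v_0\in H^1_0(\Omega)$ --- which happens to hold in the paper's sole application, Lemma~\ref{lem:copt:regularity}, so nothing downstream breaks, but in your write-up the initial value of $w$ should be corrected and the hypothesis interpreted as ``$v'(0)\in H^1_0(\Omega)$.'' Finally, the elliptic step producing $H^4(\Omega)$ regularity requires $\partial\Omega$ sufficiently smooth (or $d=1$); this assumption is implicit in the lemma (Evans assumes it, and the paper invokes the lemma only under it), but you should state it where you invoke elliptic regularity.
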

%--------------------------------------------------------------------------------------------------------
\subsection{Space-time variational formulation of the heat equation}\label{sec:heat}
%--------------------------------------------------------------------------------------------------------
In this section we briefly introduce the coercive formulation of the heat equation~\eqref{eq:copt:heat} which we
considered in~\cite{FuehrerK_21}, see also~\cite{GantnerS_21} for a generalization.
Define the Hilbert space
\begin{align*}
  U := \left\{ (v,\ppsi)\mid v \in L^2(H^1_0(\Omega)), \ppsi\in L^2(Q), \partial_t v -\div\,\ppsi\in L^2(Q) \right\}.
\end{align*}
As was shown in~\cite[Prop.~2.1]{GantnerS_21}, $(v,\ppsi)\in U$ implies $\partial_tv\in L^2(H^{-1}(\Omega))$ and
\begin{align}\label{eq:FK:GS}
  \| v \|_{L^2(H^1_0(\Omega))\cap H^{1}(H^{-1}(\Omega))} \lesssim \| (v,\ppsi) \|_{U} \quad\text{for all } (v,\ppsi)\in U.
\end{align}
Define the bilinear form $b: U\times U\rightarrow\R$ by
\begin{align*}
  b(u,\ssigma;v,\ppsi) := \vdual{\nabla u-\ssigma}{\nabla v-\ppsi}_{Q} +
  \vdual{\partial_tu-\div\,\ssigma}{\partial_t v-\div\,\ppsi}_{Q}
  +\vdual{u(0)}{v(0)}_{\Omega}
\end{align*}
The following result is due to~\cite{FuehrerK_21,GantnerS_21}.
%--------------------------------------------------------------------------------------------------------
\begin{lemma}\label{lem:b:ell}
  The bilinear form $b:U\times U\rightarrow\R$ is coercive and bounded on $U$.
\end{lemma}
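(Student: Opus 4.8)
The plan is to take the norm on $U$ to be $\|(v,\ppsi)\|_U^2 = \|v\|_{L^2(H^1_0(\Omega))}^2 + \|\ppsi\|_{L^2(Q)}^2 + \|\partial_t v - \div\,\ppsi\|_{L^2(Q)}^2$ and to treat boundedness and coercivity separately. Boundedness is the routine part: applying the Cauchy--Schwarz inequality to each of the three terms defining $b$, the first two factors are immediately controlled by the $U$-norm since $\|\nabla u-\ssigma\|_{L^2(Q)}\le\|\nabla u\|_{L^2(Q)}+\|\ssigma\|_{L^2(Q)}$ and $\|\partial_t u-\div\,\ssigma\|_{L^2(Q)}$ enters the norm directly. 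For the boundary term I would invoke~\eqref{eq:FK:GS} together with the time-trace embedding~\eqref{eq:timetrace} to estimate $\|u(0)\|_{L^2(\Omega)}\lesssim\|u\|_{L^2(H^1_0(\Omega))\cap H^1(H^{-1}(\Omega))}\lesssim\|(u,\ssigma)\|_U$; this is also precisely what makes the evaluation $u(0)$ meaningful.

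Coercivity is the substantial step. Written out, $b(u,\ssigma;u,\ssigma)=\|\nabla u-\ssigma\|_{L^2(Q)}^2+\|\partial_t u-\div\,\ssigma\|_{L^2(Q)}^2+\|u(0)\|_{L^2(\Omega)}^2$. The middle term already equals the third contribution to the $U$-norm, and $\|\ssigma\|_{L^2(Q)}\le\|\nabla u-\ssigma\|_{L^2(Q)}+\|\nabla u\|_{L^2(Q)}$, so the entire task reduces to bounding $\|\nabla u\|_{L^2(Q)}^2$ (equivalently $\|u\|_{L^2(H^1_0(\Omega))}^2$, by Poincar\'e--Friedrichs) by $b(u,\ssigma;u,\ssigma)$. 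The key is an energy identity obtained by testing the residual $g:=\partial_t u-\div\,\ssigma$ against $u$ over $Q$. Integrating $\vdual{\partial_t u}{u}_Q$ by parts in time---legitimate precisely because~\eqref{eq:FK:GS} and~\eqref{eq:timetrace} give $u\in C(\overline J;L^2(\Omega))$ with $\partial_t u\in L^2(H^{-1}(\Omega))$---and integrating the duality pairing $\vdual{\div\,\ssigma}{u}_Q$ by parts in space (no boundary term, since $u\in L^2(H^1_0(\Omega))$) yields
\[
\tfrac{1}{2}\|u(T)\|_{L^2(\Omega)}^2 - \tfrac{1}{2}\|u(0)\|_{L^2(\Omega)}^2 + \vdual{\ssigma}{\nabla u}_Q = \vdual{g}{u}_Q.
\]
Writing $\ssigma=\nabla u-(\nabla u-\ssigma)$ in the third term turns $\vdual{\ssigma}{\nabla u}_Q$ into $\|\nabla u\|_{L^2(Q)}^2$ minus a cross term involving the residual $\nabla u-\ssigma$.

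Rearranging so that $\|\nabla u\|_{L^2(Q)}^2+\tfrac12\|u(T)\|_{L^2(\Omega)}^2$ sits on the left, I would bound the right-hand side by Young's inequality: the residual cross term by $\tfrac14\|\nabla u\|_{L^2(Q)}^2+\|\nabla u-\ssigma\|_{L^2(Q)}^2$, and $\vdual{g}{u}_Q$ via Poincar\'e--Friedrichs ($\|u\|_{L^2(Q)}\lesssim\|\nabla u\|_{L^2(Q)}$) by $\tfrac14\|\nabla u\|_{L^2(Q)}^2$ plus a constant multiple of $\|g\|_{L^2(Q)}^2$; the term $\tfrac12\|u(0)\|_{L^2(\Omega)}^2$ is already part of $b$. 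Absorbing the two $\tfrac14\|\nabla u\|_{L^2(Q)}^2$ contributions on the left and discarding the favorable $\tfrac12\|u(T)\|_{L^2(\Omega)}^2$ gives $\|\nabla u\|_{L^2(Q)}^2\lesssim b(u,\ssigma;u,\ssigma)$, whence $\|\ssigma\|_{L^2(Q)}^2\lesssim b(u,\ssigma;u,\ssigma)$ and finally $\|(u,\ssigma)\|_U^2\lesssim b(u,\ssigma;u,\ssigma)$.

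The main obstacle is the rigorous justification of the integration by parts in time in the energy identity: one must know \emph{a priori} that membership in $U$ already forces $u\in C(\overline J;L^2(\Omega))$ with $\partial_t u\in L^2(H^{-1}(\Omega))$, so that $\vdual{\partial_t u}{u}_Q$ is read as a duality pairing and the fundamental theorem of calculus applies to $t\mapsto\|u(t)\|_{L^2(\Omega)}^2$. This is exactly the content of~\eqref{eq:FK:GS}; once it is available, the remaining steps are standard Young and Poincar\'e manipulations, and the nonnegative final-time term is simply dropped.
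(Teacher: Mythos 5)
Your proof is correct. A point of comparison: the paper does not actually prove this lemma at all --- it simply attributes the result to \cite{FuehrerK_21,GantnerS_21} --- so your argument amounts to a self-contained reconstruction of the proof in those references, and it is essentially the standard one. Boundedness follows from Cauchy--Schwarz plus the trace bound $\|u(0)\|_{L^2(\Omega)}\lesssim\|(u,\ssigma)\|_U$, which as you note requires \eqref{eq:timetrace} together with \eqref{eq:FK:GS}. For coercivity, the heart of the matter is the energy identity obtained by pairing the residual $g:=\partial_t u-\div\,\ssigma$ with $u$, and the two delicate points are exactly the ones you flag and handle: (i) membership in $U$ already gives $\partial_t u\in L^2(H^{-1}(\Omega))$ and $u\in C(\overline J;L^2(\Omega))$ by \eqref{eq:FK:GS} and \eqref{eq:timetrace}, so $t\mapsto\|u(t)\|_{L^2(\Omega)}^2$ is absolutely continuous and the integration by parts in time is legitimate; (ii) since $\ssigma\in L^2(Q)^d$, the distributional divergence $\div\,\ssigma=\partial_t u-g$ lies in $L^2(H^{-1}(\Omega))$, so the $L^2$ inner product $\vdual{g}{u}_Q$ may be read as a duality pairing and split, with the spatial integration by parts producing no boundary term because $u\in L^2(H^1_0(\Omega))$. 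With these in place, your Young/Poincar\'e absorption, discarding the nonnegative final-time term $\tfrac12\|u(T)\|_{L^2(\Omega)}^2$, correctly yields $\|\nabla u\|_{Q}^2\lesssim b(u,\ssigma;u,\ssigma)$ and hence coercivity in the graph norm of $U$.
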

%--------------------------------------------------------------------------------------------------------
Note that the weak solution $u$ of~\eqref{eq:copt:heat} fulfills $(u,\nabla u)\in U$ and solves
\begin{align*}
  b(u,\nabla u;v,\ppsi) = \vdual{f}{\partial_tv-\div\,\ppsi}_{Q} + \vdual{u_0}{v(0)}_{\Omega} \quad \text{ for all }(v,\ppsi)\in U.
\end{align*}
The right-hand side in the preceding formulation is bounded due to Cauchy--Schwarz, the time-trace theorem~\eqref{eq:timetrace}, and~\eqref{eq:FK:GS},
\begin{align*}
  |\vdual{f}{\partial_tv-\div\,\ppsi}_Q + \vdual{u_0}{v(0)}_\Omega\| &\leq \|f\|_Q\|\partial_tv-\div\,\ppsi\|_Q +\|u_0\|_\Omega\|v(0)\|_\Omega 
  \\&\lesssim (\|f\|_Q+\|u_0\|_\Omega)\|(v,\ppsi)\|_U.
\end{align*}
Together with Lemma~\ref{lem:b:ell}, the Lax--Milgram lemma then implies that $(u,\nabla u)$ is also the unique solution of this variational formulation.
%--------------------------------------------------------------------------------------------------------
\subsection{Space-time variational formulation of the optimal control problem}
%--------------------------------------------------------------------------------------------------------
Now we compute the first-order optimality conditions for the functional $J$ from~\eqref{eq:copt:opt} and include the state equation
using the bilinear form $b$ defined in the previous section. Hence, we are looking for
$f\in L^2(Q)$ and $(u,\qq)\in U$ such that
\begin{align*}
  \alpha\vdual{u}{w}_{Q} + \beta \vdual{u(T)}{w(T)}_\Omega + \lambda \vdual{f}{g}_{Q}&= \alpha\vdual{u_d}{w}_{Q} + \beta\vdual{u_{T,d}}{w(T)}_\Omega\\
  b(u,\qq;v,\ppsi)-\vdual{f}{\partial_tv-\div\,\ppsi}_Q &= \vdual{u_0}{v(0)}_\Omega
\end{align*}
for all $g\in L^2(Q)$ and $(v,\ppsi)\in U$, where $w$ solves
\begin{align*}
  \partial_t w -\Delta w &= g\text{ in } Q,\\
  w &= 0 \text{ on } J\times \partial\Omega,\\
  w(0) &= 0 \text{ on } \Omega.
\end{align*}
Using a Lagrangian multiplier, we arrive at the problem to
find $f\in L^2(Q)$, $(u,\qq),(y,\xxi)\in U$, such that
\begin{align*}
  \alpha\vdual{u}{w}_{Q} + \beta\vdual{u(T)}{w(T)}_\Omega + \lambda (f,g)_{Q} + b(w,\pphi;y,\xxi)
  - \vdual{g}{\partial_t y - \div\,\xxi}_{Q} &= \alpha\vdual{u_d}{w}_{Q} + \beta\vdual{u_{T,d}}{w(T)}_\Omega\\
  b(u,\qq;v,\ppsi)-\vdual{f}{\partial_tv-\div\,\ppsi} &= \vdual{u_0}{v(0)}_\Omega,
\end{align*}
for all $g\in L^2(Q)$, $(w,\pphi),(v,\ppsi)\in U$.
%--------------------------------------------------------------------------------------------------------
Let us write this problem in a compact form by introducing
\begin{align*}
  V := L^2(Q)\times U
\end{align*}
and bilinear forms $a:V^2\rightarrow\R$ and
$\wilde b:V\times U\rightarrow\R$,
\begin{align*}
  a(f,u,\qq;g,w,\pphi) &:= \alpha\vdual{u}{w}_{Q} + \beta\vdual{u(T)}{w(T)}_\Omega + \lambda (f,g)_{Q}\\
  \wilde b(f,u,\qq;v,\ppsi) &:= b(u,\qq;v,\ppsi)-\vdual{f}{\partial_tv-\div\,\ppsi}.
\end{align*}
Eventually, we arrive at the following formulation: find $(f,u,\qq)\in V$ and $(y,\xxi)\in U$ such that
\begin{align}\label{eq:copt:var}
  \begin{split}
    a(f,u,\qq;g,w,\pphi) + \wilde b(g,w,\pphi;y,\xxi) &= \alpha\vdual{u_d}{w}_{Q} + \beta\vdual{u_{T,d}}{w(T)}_\Omega\\
    \wilde b(f,u,\qq;v,\ppsi) &= \vdual{u_0}{v(0)}_\Omega,
  \end{split}
\end{align}
for all $(g,w,\pphi)\in V$ and $(v,\ppsi)\in U$.
%--------------------------------------------------------------------------------------------------------
\begin{theorem}\label{thm:copt:wp}
  Problem~\eqref{eq:copt:var} is well-posed: there is a unique solution $(f,u,\qq)\in V$ and $(y,\xxi)\in U$, and
  \begin{align*}
    \| (f,u,\qq) \|_V + \|(y,\xxi)\|_U \lesssim \alpha \| u_d \|_{L^2(Q)} + \beta \| u_{T,d} \|_{L^2(\Omega)} + \| u_0 \|_{L^2(\Omega)}.
  \end{align*}
\end{theorem}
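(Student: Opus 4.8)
The plan is to invoke the Brezzi theory for saddle-point problems applied to the pair of bilinear forms $a$ and $\wilde b$ on the trial space $V = L^2(Q)\times U$ and the multiplier space $U$. Concretely, I would verify four ingredients: boundedness of $a$ and of $\wilde b$, coercivity of $a$ on the kernel of $\wilde b$, and the inf-sup condition for $\wilde b$. Boundedness is routine: the $\lambda$- and $\alpha$-terms in $a$ are plain $L^2(Q)$ products, while the end-time terms $\vdual{u(T)}{w(T)}_\Omega$ are controlled by noting that $(u,\qq)\in U$ embeds into $C(\overline J;L^2(\Omega))$ through~\eqref{eq:FK:GS} together with the time-trace theorem~\eqref{eq:timetrace}, so that $\|u(T)\|_{L^2(\Omega)}\lesssim\|(u,\qq)\|_U$. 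For $\wilde b$, Lemma~\ref{lem:b:ell} bounds $b$, and the remaining term is estimated by $\|f\|_{L^2(Q)}\|\partial_tv-\div\,\ppsi\|_{L^2(Q)}\le\|f\|_{L^2(Q)}\|(v,\ppsi)\|_U$.

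The inf-sup condition for $\wilde b$ is where the coercive formulation of the heat equation pays off, and I expect this to be the cleanest step. Given any $(v,\ppsi)\in U$, I would simply test with $(0,v,\ppsi)\in V$, for which $\wilde b(0,v,\ppsi;v,\ppsi)=b(v,\ppsi;v,\ppsi)\gtrsim\|(v,\ppsi)\|_U^2$ by the coercivity in Lemma~\ref{lem:b:ell}. Since $\|(0,v,\ppsi)\|_V=\|(v,\ppsi)\|_U$, this yields $\sup_{(f,u,\qq)\in V}\wilde b(f,u,\qq;v,\ppsi)/\|(f,u,\qq)\|_V\gtrsim\|(v,\ppsi)\|_U$, i.e. a uniform inf-sup constant. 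Put differently, the embedding $U\hookrightarrow V$, $(v,\ppsi)\mapsto(0,v,\ppsi)$, reduces the inf-sup for $\wilde b$ to the coercivity of $b$.

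The step I expect to be the real heart of the argument is coercivity of $a$ on the kernel $K:=\{(f,u,\qq)\in V:\wilde b(f,u,\qq;v,\ppsi)=0\ \forall\,(v,\ppsi)\in U\}$. Here the kernel restriction is genuinely needed, since $a$ is only positive semidefinite on all of $V$ (as $\alpha,\beta$ may vanish). For $(f,u,\qq)\in K$ the defining relation reads $b(u,\qq;v,\ppsi)=\vdual{f}{\partial_tv-\div\,\ppsi}_Q$ for all $(v,\ppsi)\in U$; testing with $(v,\ppsi)=(u,\qq)$ and using coercivity of $b$ gives
\begin{align*}
  \|(u,\qq)\|_U^2 \lesssim b(u,\qq;u,\qq) = \vdual{f}{\partial_tu-\div\,\qq}_Q \le \|f\|_{L^2(Q)}\,\|(u,\qq)\|_U,
\end{align*}
hence $\|(u,\qq)\|_U\lesssim\|f\|_{L^2(Q)}$. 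Since $\lambda>0$, this yields
\begin{align*}
  a(f,u,\qq;f,u,\qq) \ge \lambda\|f\|_{L^2(Q)}^2 \gtrsim \|f\|_{L^2(Q)}^2 + \|(u,\qq)\|_U^2 = \|(f,u,\qq)\|_V^2,
\end{align*}
the desired coercivity on $K$; the positivity of the dropped $\alpha$- and $\beta$-terms is exactly what allows me to retain only the $\lambda$-term.

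Finally, the stability estimate follows from the Brezzi theory once the two right-hand side functionals are bounded. The functional $(g,w,\pphi)\mapsto\alpha\vdual{u_d}{w}_Q+\beta\vdual{u_{T,d}}{w(T)}_\Omega$ has $V'$-norm $\lesssim\alpha\|u_d\|_{L^2(Q)}+\beta\|u_{T,d}\|_{L^2(\Omega)}$, again using the time trace for the end-time term, and $(v,\ppsi)\mapsto\vdual{u_0}{v(0)}_\Omega$ has $U'$-norm $\lesssim\|u_0\|_{L^2(\Omega)}$ by~\eqref{eq:timetrace} and~\eqref{eq:FK:GS}. Summing the resulting bounds gives the claimed estimate.
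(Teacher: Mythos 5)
Your proposal is correct and follows essentially the same route as the paper's proof: Brezzi's theory with the test element $(0,v,\ppsi)$ for the inf-sup condition of $\wilde b$, and coercivity of $a$ on $\ker\wilde b$ via the bound $\|(u,\qq)\|_U\lesssim\|f\|_{L^2(Q)}$ obtained from the coercivity of $b$. Your version merely spells out details the paper leaves implicit (testing the kernel relation with $(u,\qq)$ itself, and the remark that the kernel restriction is what compensates for $a$ being only semidefinite on $V$).
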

\begin{proof}
  We will verify the conditions of Brezzi's theory, cf.~\cite[Thm~4.2.3]{BoffiBF_13}.
  The right-hand side in~\eqref{eq:copt:var} is a linear functional on $V\times U$, and due to the Cauchy--Schwarz inequality and the time-trace theorem~\eqref{eq:timetrace} its norm
  is bounded by
  \begin{align*}
    \alpha \| u_d \|_{L^2(Q)} + \beta \| u_{T,d} \|_{L^2(\Omega)} + \| u_0 \|_{L^2(\Omega)}.
  \end{align*}
  The Cauchy--Schwarz inequality and~\eqref{eq:timetrace} also show that the bilinear forms $a$ and $\wilde b$ are bounded.
  Now let $(v,\ppsi)\in U$ be arbitrary.
  Then, choose $(f,u,\qq) = (0,v,\ppsi)$ and note that due to the coercivity of $b$ from Lemma~\ref{lem:b:ell}
  \begin{align*}
    \wilde b(f,u,\qq;v,\ppsi) = b(v,\ppsi;v,\ppsi)\gtrsim \norm{(v,\ppsi)}{U}^2 = \norm{(v,\ppsi)}{U}\norm{(f,u,\qq)}{V}.
  \end{align*}
  This shows the continuous inf-sup condition for $\wilde b$. Next let $(f,u,\qq)\in\ker \wilde b$, that is,
  \begin{align*}
    b(u,\qq;v,\ppsi)=\vdual{f}{\partial_tv-\div\,\ppsi} \quad \text{ for all } (v,\ppsi)\in U.
  \end{align*}
  Then, due to Lemma~\ref{lem:b:ell} and boundedness of the right-hand side, we have
  \begin{align*}
    \norm{(u,\qq)}{U} \lesssim \norm{f}{L^2(Q)},
  \end{align*}
  and this yields
  \begin{align*}
    \norm{(u,\qq)}{U}^2 + \norm{f}{L^2(Q)}^2 \lesssim \norm{f}{L^2(Q)}^2 \lesssim a(f,u,\qq;f,u,\qq),
  \end{align*}
  which proves the coercivity of $a$ on $\ker\wilde b$.
\end{proof}
%--------------------------------------------------------------------------------------------------------
Next, we will derive some regularity results. To this end, we will characterize the
Lagrangian multiplier $(y,\xxi)$ in~\eqref{eq:copt:var} in terms of the so-called \textit{adjoint state}.
The following result can be found in~\cite[Ch.~3, Sec.~2.3]{Lions_71}.
%--------------------------------------------------------------------------------------------------------
\begin{theorem}\label{thm:copt}
  The parabolic optimal control problem~\eqref{eq:copt} has a unique solution $f\in L^2(Q)$, $u\in L^2(H^1_0(\Omega))\cap H^1(H^{-1}(\Omega))$,
  determined uniquely by~\eqref{eq:copt:heat}, the adjoint state $p \in L^2(H^1_0(\Omega))\cap H^1(H^{-1}(\Omega))$ given by
  \begin{align}\label{eq:copt:transpose}
    \begin{split}
      -\partial_t p -\Delta p &= \alpha \left( u-u_d \right)\text{ in } Q,\\
      p &= 0 \text{ on } J\times \partial\Omega,\\
      p(T) &= \beta \left( u(T)-u_{T,d} \right) \text{ on } \Omega,
    \end{split}
  \end{align}
  and $p = -\lambda f$.
\end{theorem}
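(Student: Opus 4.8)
The plan is to treat~\eqref{eq:copt} as an unconstrained convex minimization over the control $f\in L^2(Q)$. First I would introduce the affine control-to-state map $S$ sending $f$ to the unique weak solution $u=Sf\in L^2(H^1_0(\Omega))\cap H^1(H^{-1}(\Omega))$ of~\eqref{eq:copt:heat}; its well-posedness and this regularity are the standard energy-space result for the heat equation with $f\in L^2(Q)$ and $u_0\in L^2(\Omega)$, and the time-trace theorem~\eqref{eq:timetrace} makes the map $u\mapsto u(T)\in L^2(\Omega)$ continuous, so that $J$ from~\eqref{eq:copt:opt} is a well-defined functional of $f$. Writing $J(f)=\tfrac{\alpha}{2}\|Sf-u_d\|_{L^2(Q)}^2+\tfrac{\beta}{2}\|(Sf)(T)-u_{T,d}\|_{L^2(\Omega)}^2+\tfrac{\lambda}{2}\|f\|_{L^2(Q)}^2$, the term $\tfrac{\lambda}{2}\|f\|^2$ with $\lambda>0$ renders $J$ strictly convex, continuous and coercive on $L^2(Q)$; the direct method then yields a unique minimizer $f$, and hence a unique state $u=Sf$.

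Next I would extract the Euler--Lagrange equation $\langle J'(f),g\rangle=0$ for all $g\in L^2(Q)$. Writing $w$ for the solution of the \emph{linearized} state equation ($\partial_t w-\Delta w=g$, homogeneous boundary data, and \emph{zero} initial data), which is the directional derivative of the affine map $S$ in direction $g$, this reads
\[
  \alpha\vdual{u-u_d}{w}_Q+\beta\vdual{u(T)-u_{T,d}}{w(T)}_\Omega+\lambda\vdual{f}{g}_Q=0.
\]

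The heart of the argument is to eliminate the auxiliary field $w$ via the adjoint state. I would define $p$ as the unique solution of the backward problem~\eqref{eq:copt:transpose}, whose well-posedness and membership in $L^2(H^1_0(\Omega))\cap H^1(H^{-1}(\Omega))$ follow from the forward theory after the time reversal $t\mapsto T-t$, noting that the right-hand side $\alpha(u-u_d)\in L^2(Q)$ and the terminal datum $\beta(u(T)-u_{T,d})\in L^2(\Omega)$ are admissible. Testing~\eqref{eq:copt:transpose} against $w$ and integrating by parts in space (using that both $p$ and $w$ vanish on $J\times\partial\Omega$) and in time converts the left-hand side into $\vdual{p}{g}_Q-\vdual{p(T)}{w(T)}_\Omega$; substituting $p(T)=\beta(u(T)-u_{T,d})$ and $-\partial_t p-\Delta p=\alpha(u-u_d)$ then gives
\[
  \vdual{p}{g}_Q=\alpha\vdual{u-u_d}{w}_Q+\beta\vdual{u(T)-u_{T,d}}{w(T)}_\Omega.
\]
Comparing with the Euler--Lagrange equation yields $\vdual{p+\lambda f}{g}_Q=0$ for every $g\in L^2(Q)$, hence $p=-\lambda f$.

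The main obstacle is precisely the space-time integration by parts in this last step. Since only $u,p\in L^2(H^1_0(\Omega))\cap H^1(H^{-1}(\Omega))$, the time derivatives live in $H^{-1}(\Omega)$ rather than $L^2(\Omega)$, so the formal pointwise integrals must be read as duality pairings, and the boundary term at $t=T$ must be handled through the integration-by-parts identity $\int_J\big(\langle p',w\rangle+\langle w',p\rangle\big)\,ds=\vdual{p(T)}{w(T)}_\Omega-\vdual{p(0)}{w(0)}_\Omega$, which is valid on this energy space, together with $w(0)=0$. Granting this, uniqueness of the whole triple $(f,u,p)$ follows from uniqueness of the minimizer in the first step, since $u$ and then $p$ are determined successively; the coupled forward-backward system is closed by the relation $f=-p/\lambda$.
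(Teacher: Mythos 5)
Your proof is correct in substance, but note that the paper does not prove Theorem~\ref{thm:copt} at all: it is quoted verbatim from the cited reference (Lions, Ch.~3, Sec.~2.3), so there is no in-paper argument to compare against. What you have reconstructed is precisely the classical argument from that literature: existence and uniqueness of the minimizer by the direct method (strict convexity and coercivity from the $\lambda>0$ Tikhonov term), the Euler--Lagrange identity $\alpha\vdual{u-u_d}{w}_Q+\beta\vdual{u(T)-u_{T,d}}{w(T)}_\Omega+\lambda\vdual{f}{g}_Q=0$ with $w$ the linearized state, and elimination of $w$ through the backward adjoint problem and the space-time integration-by-parts identity on $L^2(H^1_0(\Omega))\cap H^1(H^{-1}(\Omega))$, which you correctly flag as the delicate point (the pairings are $H^{-1}\times H^1_0$ dualities and the absolute continuity of $t\mapsto\vdual{p(t)}{w(t)}_\Omega$ on this energy space is what legitimizes the boundary terms). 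One refinement is worth adding: the theorem asserts that the triple $(f,u,p)$ is \emph{determined uniquely by} the optimality system, which requires not only that the minimizer satisfies the system (necessity, which you prove) but also that any solution of the coupled system is the minimizer. This is a one-line consequence of convexity: reversing your integration-by-parts computation shows that any solution of the system satisfies $\dual{J'(f)}{g}=\vdual{p+\lambda f}{g}_Q=0$ for all $g$, and for a convex functional the first-order condition is sufficient, so such an $f$ coincides with the unique minimizer; uniqueness of $u$ and $p$ then follows successively as you indicate. With that remark inserted, your argument is a complete and standard proof of the quoted result.
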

\begin{remark}
  We note that~\eqref{eq:copt:transpose} is basically a heat equation, which can be seen using the change of variables $t\mapsto T-t$.
  Hence, without further ado, the regularity results of Lemma~\ref{lem:par:reg} will be applied also in the case of the adjoint state $p$.
\end{remark}
\begin{lemma}\label{lem:lagmult}
  Let $(f,u,\qq)\in V$, $(y,\xxi)\in U$ be the solution of~\eqref{eq:copt:var} and $p$ the adjoint state given by Theorem~\ref{thm:copt}.
  Then, $\qq=\nabla u$,
  \begin{subequations}\label{eq:lag:adj}
  \begin{align}
    \begin{split}
      \partial_t y -\Delta y &= -p+\Delta p\text{ in } Q,\\
      y &= 0 \text{ on } J\times \partial\Omega,\\
      y(0) &= -p(0) \text{ on } \Omega,
    \end{split}
  \end{align}
  and
  \begin{align}
    \xxi = \nabla y + \nabla p.
  \end{align}
  \end{subequations}
\end{lemma}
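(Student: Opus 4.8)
The plan is to read off the three assertions directly from the two rows of~\eqref{eq:copt:var}, testing against suitable elements of $V$ and $U$ and exploiting the coercive structure of $b$ together with the adjoint equation~\eqref{eq:copt:transpose}. First I would dispose of $\qq=\nabla u$: the second row of~\eqref{eq:copt:var} reads $b(u,\qq;v,\ppsi)=\vdual{f}{\partial_tv-\div\,\ppsi}_Q+\vdual{u_0}{v(0)}_\Omega$ for all $(v,\ppsi)\in U$, which is precisely the coercive formulation of the heat equation~\eqref{eq:copt:heat} with data $f$ and $u_0$ from Section~\ref{sec:heat}. By Lemma~\ref{lem:b:ell} and Lax--Milgram this has a unique solution, and since the weak solution $u$ of~\eqref{eq:copt:heat} satisfies $(u,\nabla u)\in U$ and solves it, we get $(u,\qq)=(u,\nabla u)$.

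Next I would split the first row of~\eqref{eq:copt:var}. Testing with $(g,w,\pphi)=(g,0,0)$, $g\in L^2(Q)$ arbitrary, kills every term involving $w$ and leaves $\lambda\vdual{f}{g}_Q-\vdual{g}{\partial_ty-\div\,\xxi}_Q=0$, hence
\[
  \partial_t y-\div\,\xxi=\lambda f\quad\text{in }L^2(Q),
\]
and invoking $p=-\lambda f$ from Theorem~\ref{thm:copt} this reads $\partial_t y-\div\,\xxi=-p$. Testing instead with $(0,w,\pphi)$, $(w,\pphi)\in U$ arbitrary, yields
\[
  b(w,\pphi;y,\xxi)=\alpha\vdual{u_d-u}{w}_Q+\beta\vdual{u_{T,d}-u(T)}{w(T)}_\Omega .
\]

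The heart of the matter is to rewrite this right-hand side using the adjoint equation~\eqref{eq:copt:transpose}. Substituting $\alpha(u_d-u)=\partial_t p+\Delta p$ and $\beta(u_{T,d}-u(T))=-p(T)$, integrating by parts once in time (the Bochner formula for $p,w\in L^2(H^1_0(\Omega))\cap H^1(H^{-1}(\Omega))$, whose endpoint traces are legitimate by~\eqref{eq:timetrace}) and once in space (using $p\in H^1_0(\Omega)$ to kill boundary terms, via the Green identity $\vdual{\div\,\pphi}{p}_Q=-\vdual{\pphi}{\nabla p}_Q$), I expect the right-hand side to collapse to $-\vdual{\nabla w-\pphi}{\nabla p}_Q-\vdual{\partial_t w-\div\,\pphi}{p}_Q-\vdual{w(0)}{p(0)}_\Omega$. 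Inserting the definition of $b$ and replacing its second slot via $\partial_t y-\div\,\xxi=-p$, the pairings $\vdual{\partial_t w-\div\,\pphi}{p}_Q$ cancel, leaving
\[
  \vdual{\nabla w-\pphi}{\nabla y-\xxi+\nabla p}_Q+\vdual{w(0)}{y(0)+p(0)}_\Omega=0\quad\text{for all }(w,\pphi)\in U .
\]
From here a density argument finishes the proof: taking $w=0$ and letting $\pphi$ range over the fields with $\div\,\pphi\in L^2(Q)$ (dense in $L^2(Q)^d$) forces $\xxi=\nabla y+\nabla p$; choosing then $(w,\nabla w)\in U$ with $w(0)$ ranging over a dense subset of $L^2(\Omega)$ forces $y(0)=-p(0)$. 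Combining $\xxi=\nabla y+\nabla p$ with $\partial_t y-\div\,\xxi=-p$ gives $\partial_t y-\Delta y=-p+\Delta p$, while $y=0$ on $J\times\partial\Omega$ is built into $y\in L^2(H^1_0(\Omega))$; this is exactly~\eqref{eq:lag:adj}.

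I expect the integration-by-parts identity in the third paragraph to be the main obstacle. All pairings there are genuine duality pairings between $L^2(H^{-1}(\Omega))$ and $L^2(H^1_0(\Omega))$ rather than $L^2(Q)$ inner products, and $\partial_t w$, $\div\,\pphi$ are individually controlled only through their difference $\partial_t w-\div\,\pphi\in L^2(Q)$; hence the splitting, the temporal traces $p(0),w(0),p(T),w(T)$, and their recombination must each be justified through~\eqref{eq:timetrace} and~\eqref{eq:FK:GS} before the cancellations can be carried out.
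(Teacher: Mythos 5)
Your argument is correct in substance but runs in the opposite direction to the paper's. The paper isolates the identity~\eqref{lem:lagmult:eq1} (your $(0,w,\pphi)$ test), notes that coercivity of $b$ (Lemma~\ref{lem:b:ell}) makes its solution \emph{unique}, and then \emph{verifies} --- by the same space--time integration by parts you perform, read backwards --- that the solution of~\eqref{eq:lag:adj} satisfies this identity; uniqueness then forces $(y,\xxi)$ to coincide with it. You instead derive the strong form \emph{forward} from the saddle point: the $(g,0,0)$ test gives $\partial_t y-\div\,\xxi=\lambda f$, the $(0,w,\pphi)$ test gives the variational identity, and after substituting the adjoint PDE and cancelling, you recover~\eqref{eq:lag:adj} through density arguments. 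The trade-off: the paper's direction needs no density/localization step (uniqueness does that work), but tacitly presupposes that~\eqref{eq:lag:adj} admits a solution in $U$ (standard parabolic theory); your direction needs no a-priori solvability of~\eqref{eq:lag:adj}, but requires the density step and the duality-pairing care you correctly flag --- which is justifiable exactly as in the paper's own computation. Your explicit Lax--Milgram treatment of $\qq=\nabla u$ spells out what the paper leaves implicit from Section~\ref{sec:heat}.

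There is, however, one genuine logical issue: you invoke $p=-\lambda f$ from Theorem~\ref{thm:copt} with $f$ the control component of the solution of~\eqref{eq:copt:var}. Theorem~\ref{thm:copt} asserts $p=-\lambda f$ for the minimizer of~\eqref{eq:copt}, and nothing established up to this point identifies the saddle-point solution with that minimizer (the derivation of~\eqref{eq:copt:var} from the optimality conditions is formal). Note that the paper's proof is structured precisely to avoid this: it uses only the defining equation~\eqref{eq:copt:transpose} of $p$, never $p=-\lambda f$; in the paper's route the relation $\lambda f=-p$ --- and hence the identification of the two solutions --- follows \emph{a posteriori} by combining~\eqref{eq:lag:adj} with your $(g,0,0)$ identity, so assuming it as input is close to circular. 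The gap is repairable within your own scheme: do not replace $\lambda f$ by $-p$; instead carry the unknown combination through the expansion of $b$, so that the cancellation step leaves
\begin{align*}
  \vdual{\nabla w-\pphi}{\nabla y-\xxi+\nabla p}_Q
  +\vdual{\partial_t w-\div\,\pphi}{\partial_t y-\div\,\xxi+p}_Q
  +\vdual{w(0)}{y(0)+p(0)}_\Omega=0
  \quad\text{for all }(w,\pphi)\in U,
\end{align*}
and then test with $(w,\nabla w)$ where $w$ solves the heat equation with arbitrary data $(h,w_0)\in L^2(Q)\times L^2(\Omega)$: this forces $\partial_t y-\div\,\xxi=-p$ and $y(0)=-p(0)$, after which your $\pphi$-density argument gives $\xxi=\nabla y+\nabla p$, and $p=-\lambda f$ drops out as a corollary rather than an assumption. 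Alternatively, one can justify the identification separately by the standard convexity/KKT sufficiency argument; but as written, this step is a gap relative to what has been proved at this point in the paper.
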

\begin{proof}
  We note that from~\eqref{eq:copt:var} it follows that the Lagrangian multiplier $(y,\xxi)\in U$ satisfies
  \begin{align}\label{lem:lagmult:eq1}
    b(w,\pphi;y,\xxi) = \alpha\vdual{u_d-u}{w}_{Q} + \beta\vdual{u_{T,d}-u(T)}{w(T)}_\Omega \qquad \text{ for all } (w,\pphi)\in U,
  \end{align}
  and due to coercivity of $b$ it is also the unique solution of~\eqref{lem:lagmult:eq1}. Hence, it suffices to show that a solution
  $(y,\xxi)$ of~\eqref{eq:lag:adj} also solves~\eqref{lem:lagmult:eq1}.
  This can be seen easily by integration by parts in space and time. If $(y,\xxi)$ solves~\eqref{eq:lag:adj}, then
  $\partial_ty-\div\,\xxi = -p$, and
  \begin{align*}
    b(w,\pphi;y,\xxi) &= 
    \vdual{\partial_ty-\div\,\xxi}{\partial_tw-\div\,\pphi}_Q + \vdual{\nabla y-\xxi}{\nabla w-\pphi}_Q + (w(0),y(0))_\Omega\\
    &=
    \vdual{-p}{\partial_t w-\div\,\pphi}_Q - \vdual{w(0)}{p(0)}_\Omega - \vdual{\nabla p}{\nabla w-\pphi}_Q\\
    %&=\vdual{\partial_tp}{w}_Q + \vdual{p(0)}{w(0)}_\Omega - \vdual{\nabla p}{\pphi}_Q - \vdual{\nabla p}{\nabla w}_Q
    %+ \vdual{\nabla p}{\pphi}_Q - \vdual{p(0)}{w(0)}_\Omega\\
    & = \vdual{\partial_t p}{w}_Q - \vdual{\nabla p}{\nabla w}_Q + \beta\vdual{u_{T,d}-u(T)}{w(T)}_\Omega\\
    &= \alpha\vdual{u_d-u}{w}_Q + \beta\vdual{u_{T,d}-u(T)}{w(T)}_\Omega,
  \end{align*}
  which shows~\eqref{lem:lagmult:eq1}.
\end{proof}
%--------------------------------------------------------------------------------------------------------
We can now show the following regularity results.
\begin{lemma}\label{lem:copt:regularity}
  Let $\Omega\subset\R^d$ with $\partial\Omega$ be smooth, or, particularly, $\Omega\subset\R$ an interval.
  Suppose that $u_0\in H^1_0(\Omega)\cap H^3(\Omega)$ with $\Delta u_0\in H^1_0(\Omega)$, and
  $u_d\in L^2(H^2(\Omega))\cap H^1(L^2(\Omega))$ with $u_d(T)\in H^1_0(\Omega)$,
  and $u_{T,d}\in H^1_0(\Omega)$ with $\Delta u_{T,d}\in H^1_0(\Omega)$.
  Let $(f,u,\qq)\in U$, $(y,\xxi)\in V$ be the solution of~\eqref{eq:copt:var}, cf.~Thm.~\ref{thm:copt:wp},
  and $p\in L^2(H^1_0(\Omega))\cap H^1(H^{-1}(\Omega))$ the adjoint state, cf.~Thm.~\ref{thm:copt}.
  Then
  \begin{align}\label{lem:copt:regularity:eq1}
    f,u,p &\in L^2(H^4(\Omega))\cap H^1(H^2(\Omega))\cap H^2(L^2(\Omega)).
  \end{align}
  Furthermore,~\eqref{lem:copt:regularity:eq1} implies
  \begin{align*}
    y &\in L^2(H^4(\Omega))\cap H^1(H^2(\Omega))\cap H^2(L^2(\Omega)).
  \end{align*}
\end{lemma}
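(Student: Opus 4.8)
The plan is to bootstrap the spatial regularity of the coupled system by repeated application of Lemma~\ref{lem:par:reg}, exploiting that by Theorem~\ref{thm:copt} we have $f=-p/\lambda$, so that $f$ and $p$ share the same regularity and it suffices to treat $u$ and $p$. The point is that $u$ solves the forward heat equation~\eqref{eq:copt:heat} with right-hand side $f=-p/\lambda$, while $p$ solves the backward heat equation~\eqref{eq:copt:transpose}; after the time reversal $t\mapsto T-t$ (cf.\ the remark following Theorem~\ref{thm:copt}) both are governed by Lemma~\ref{lem:par:reg}. Since the data of $u$ depend on $p$ and vice versa, the order of the lifting matters: I would lift $u$ to $H^2$, then $p$ to $H^2$, then $u$ to the top $H^4$-regularity, and only afterwards $p$.

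Concretely, starting from the base regularity $u,p\in L^2(H^1_0(\Omega))\cap H^1(H^{-1}(\Omega))$, part~1 of Lemma~\ref{lem:par:reg} applied to $u$ (right-hand side $f=-p/\lambda\in L^2(L^2(\Omega))$, initial datum $u_0\in H^1_0(\Omega)$) gives $u\in L^2(H^2(\Omega))\cap H^1(L^2(\Omega))\cap C(\overline J;H^1_0(\Omega))$, so $u(T)\in H^1_0(\Omega)$. Part~1 applied to $p$ (right-hand side $\alpha(u-u_d)\in L^2(L^2(\Omega))$, final datum $\beta(u(T)-u_{T,d})\in H^1_0(\Omega)$) then gives $p\in L^2(H^2(\Omega))\cap H^1(L^2(\Omega))\cap C(\overline J;H^1_0(\Omega))$, whence $f\in L^2(H^2(\Omega))\cap H^1(L^2(\Omega))$ and $p(0)\in H^1_0(\Omega)$. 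Next, part~2 applied to $u$ uses $u_0\in H^1_0(\Omega)\cap H^3(\Omega)$ and the compatibility $f(0)-\Delta u_0=-p(0)/\lambda-\Delta u_0\in H^1_0(\Omega)$, valid because $p(0),\Delta u_0\in H^1_0(\Omega)$; this gives $u\in L^2(H^4(\Omega))\cap H^1(H^2(\Omega)\cap H^1_0(\Omega))\cap H^2(L^2(\Omega))$. The embeddings~\eqref{eq:timetrace:2} and~\eqref{eq:timetrace:3} then yield $u(T)\in H^3(\Omega)\cap H^1_0(\Omega)$ and $\partial_t u(T)\in H^1_0(\Omega)$, and rewriting the state equation as $\Delta u=\partial_t u-f$ furnishes the crucial fact $\Delta u(T)=\partial_t u(T)-f(T)\in H^1_0(\Omega)$. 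Finally, part~2 applied to $p$ uses that $\alpha(u-u_d)\in L^2(H^2(\Omega))\cap H^1(L^2(\Omega))$, that elliptic regularity upgrades the hypotheses to $u_{T,d}\in H^3(\Omega)\cap H^1_0(\Omega)$ so that the final datum $\beta(u(T)-u_{T,d})\in H^3(\Omega)\cap H^1_0(\Omega)$, and that the compatibility $\alpha(u(T)-u_d(T))-\beta\Delta(u(T)-u_{T,d})\in H^1_0(\Omega)$ holds precisely because $u(T),u_d(T),\Delta u(T),\Delta u_{T,d}\in H^1_0(\Omega)$. This establishes~\eqref{lem:copt:regularity:eq1} for $p$, hence for $f$.

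For $y$ I would avoid checking the compatibility condition of~\eqref{eq:lag:adj} directly (which is delicate because $\Delta p(0)$ need not lie in $H^1_0(\Omega)$) by the shift $z:=y+p$. Since $y(0)=-p(0)$ we have $z(0)=0$, and $z=0$ on $J\times\partial\Omega$. Writing $\partial_t z-\Delta z=(\partial_t y-\Delta y)+(\partial_t p-\Delta p)$ and inserting $\partial_t y-\Delta y=-p+\Delta p$ from~\eqref{eq:lag:adj}, the $\Delta p$ terms cancel and leave $\partial_t z-\Delta z=\partial_t p-p$. As $p$ now enjoys the regularity~\eqref{lem:copt:regularity:eq1}, the right-hand side $\partial_t p-p\in L^2(H^2(\Omega))\cap H^1(L^2(\Omega))$, $z(0)=0\in H^1_0(\Omega)\cap H^3(\Omega)$, and the compatibility reduces to $\partial_t p(0)-p(0)\in H^1_0(\Omega)$, which holds since $\partial_t p,p\in C(\overline J;H^1_0(\Omega))$. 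Thus part~2 gives $z\in L^2(H^4(\Omega))\cap H^1(H^2(\Omega))\cap H^2(L^2(\Omega))$, and $y=z-p$ inherits the same regularity.

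The measurability and commutation manipulations (linear spatial operators commuting with $\partial_t$, cf.\ Lemma~\ref{lem:schwarz}) are routine; the genuine work is the bookkeeping of the compatibility conditions at each lifting. The most delicate one is $\Delta u(T)\in H^1_0(\Omega)$, which is not a trace statement about $u$ itself but must be extracted from the PDE via $\Delta u(T)=\partial_t u(T)-f(T)$; and for $y$, the essential observation is that the shift $z=y+p$ produces homogeneous initial data and thereby trivializes the compatibility condition that would otherwise obstruct the top-order lifting.
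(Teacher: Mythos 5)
Your proposal is correct, and for the triple $(f,u,p)$ it is essentially the paper's own argument: part~1 of Lemma~\ref{lem:par:reg} applied first to $u$, then to the time-reversed adjoint $p$; then part~2 applied to $u$ with compatibility $f(0)-\Delta u_0\in H^1_0(\Omega)$; and finally part~2 applied to $p$ with compatibility $\alpha(u(T)-u_d(T))-\beta\Delta(u(T)-u_{T,d})\in H^1_0(\Omega)$, where the key fact $\Delta u(T)=\partial_t u(T)-f(T)\in H^1_0(\Omega)$ is extracted from the PDE exactly as the paper does. (Your remark that elliptic regularity upgrades $u_{T,d}$ to $H^3(\Omega)\cap H^1_0(\Omega)$ supplies a detail the paper passes over silently but which is needed for the initial datum of that last lifting.) Where you genuinely depart from the paper is the treatment of $y$. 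The paper applies part~2 of Lemma~\ref{lem:par:reg} directly to~\eqref{eq:lag:adj}, with right-hand side $g=-p+\Delta p$ and initial datum $v_0=-p(0)$, and checks compatibility via the computation $-p(0)+\Delta p(0)-\Delta p(0)=-p(0)\in H^1_0(\Omega)$; but since $\Delta v_0=-\Delta p(0)$, the correct compatibility term is $g(0)-\Delta v_0=-p(0)+2\Delta p(0)$, so the paper's check contains a sign slip and in fact requires $\Delta p(0)\in H^1_0(\Omega)$ --- which, as you observe, is not available under the stated hypotheses: at $t=0$ the adjoint equation gives $\Delta p(0)=-\partial_t p(0)-\alpha(u_0-u_d(0))$, and nothing forces $u_d(0)\in H^1_0(\Omega)$. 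Your shift $z=y+p$ circumvents this cleanly: $z$ has homogeneous lateral and initial data, solves the heat equation with right-hand side $\partial_t p-p\in L^2(H^2(\Omega))\cap H^1(L^2(\Omega))$, and the compatibility reduces to $\partial_t p(0)-p(0)\in H^1_0(\Omega)$, which holds because $p\in H^1(H^2(\Omega)\cap H^1_0(\Omega))\cap H^2(L^2(\Omega))$ together with~\eqref{eq:timetrace:3} gives $\partial_t p\in C(\overline J;H^1_0(\Omega))$; then $y=z-p$ inherits~\eqref{lem:copt:regularity:eq1}. So your route proves the $y$-statement without resting on the paper's flawed compatibility check, whereas the paper's route, taken literally, would need either the extra hypothesis $u_d(0)\in H^1_0(\Omega)$ (or $\alpha=0$) or precisely your shift argument as a repair.
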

\begin{proof}
  If $(f,u,\qq)\in U$, $(y,\xxi)\in V$ solves~\eqref{eq:copt:var}, then in particular
  \begin{align*}
    \wilde b(f,u,\qq;v,\ppsi) = \vdual{u_0}{v(0)} \qquad\text{ for all } (v,\ppsi)\in V,
  \end{align*}
  which means that $u$ fulfills~\eqref{eq:copt:heat}. As $u_0\in H^1_0(\Omega)$ and $f\in L^2(L^2(\Omega))$, we obtain
  with Lemma~\ref{lem:par:reg}
  \begin{align*}
    u \in L^2(H^2(\Omega))\cap H^1(L^2(\Omega)) \cap C(\overline J;H^1_0(\Omega)).
  \end{align*}
  Next, $p$ fulfills~\eqref{eq:copt:transpose}. As $u-u_d\in L^2(L^2(\Omega))$ and, by the aforegoing result and hypothesis,
  $u(T)-u_{T,d}\in H^1_0(\Omega)$,
  we obtain with Lemma~\ref{lem:par:reg}
  \begin{align*}
    p,f \in L^2(H^2(\Omega))\cap H^1(L^2(\Omega)) \cap C(\overline J;H^1_0(\Omega)),
  \end{align*}
  where we also used $p = -\lambda f$. Next, we will bootstrap the obtained regularities. By hypothesis,
  $u_0\in H^1_0(\Omega)\cap H^3(\Omega)$ and $\Delta u_0\in H^1_0(\Omega)$. From the results already obtained,
  $f\in L^2(H^2(\Omega))\cap H^1(L^2(\Omega))$ and $f(0)\in H^1_0(\Omega)$.
  In particular, $f(0)-\Delta u_0 \in H^1_0(\Omega)$, and hence Lemma~\ref{lem:par:reg} gives
  \begin{align*}
    u &\in L^2(H^4(\Omega))\cap H^1(H^2(\Omega)\cap H^1_0(\Omega))\cap H^2(L^2(\Omega)).
  \end{align*}
  Next, by hypothesis $u_d\in L^2(H^2(\Omega))\cap H^1(L^2(\Omega))$ as well as $u_d(T),\Delta u_{T,d}\in H^1_0(\Omega)$.
  From the results already obtained, $f(T) \in H^1_0(\Omega)$. Furthermore, $u'\in L^2(H^2(\Omega)\cap H^1_0(\Omega))\cap H^1(L^2(\Omega))$.
  Hence, due to Corollary~\ref{cor:schwarz}, $u'\in C(\overline J;H^1_0(\Omega))$. In particular, $-\Delta u(T) = f(T)-u'(T)\in H^1_0(\Omega)$.
  We conclude that $u(T)-u_d(T) - \Delta(u(T) - u_{T,d})\in H^1_0(\Omega)$, and together with
  $u-u_d\in L^2(H^2(\Omega))\cap H^1(L^2(\Omega))$ we obtain, using again Lemma~\ref{lem:par:reg},
  \begin{align*}
    p &\in L^2(H^4(\Omega))\cap H^1(H^2(\Omega))\cap H^2(L^2(\Omega)).
  \end{align*}
  Finally, note that $(y,\xxi)$ solves~\eqref{eq:lag:adj}. Given that $-p+\Delta p\in L^2(H^2(\Omega))\cap H^1(L^2(\Omega))$ as well as
  $-p(0) + \Delta p(0) - \Delta p(0) = -p(0)\in H^1_0(\Omega)$, we may again apply Lemma~\ref{lem:par:reg} to obtain
  \begin{align*}
    y &\in L^2(H^4(\Omega))\cap H^1(H^2(\Omega))\cap H^2(L^2(\Omega)).
  \end{align*}
\end{proof}
%--------------------------------------------------------------------------------------------------------
\section{A space-time finite element method}
%--------------------------------------------------------------------------------------------------------
Now let $V_h\subset V$ and $U_h\subset U$ be closed subspaces, and consider the problem to find
$(f_h,u_h,\qq_h)\in V_h$ and $(y_h,\xxi_h)\in U_h$ such that
\begin{align}\label{eq:copt:disc}
  \begin{split}
    a(f_h,u_h,\qq_h;g_h,w_h,\pphi_h) + \wilde b(g_h,w_h,\pphi_h;y_h,\xxi_h) &= \alpha\vdual{u_d}{w_h}_{Q} + \beta\vdual{u_{T,d}}{w_h(T)}_\Omega\\
    \wilde b(f_h,u_h,\qq_h;v_h,\ppsi_h) &= \vdual{u_0}{v_h(0)}_\Omega,
  \end{split}
\end{align}
for all $(g_h,w_h,\pphi_h)\in V_h$ and $(v_h,\ppsi_h)\in U_h$.
\begin{theorem}\label{thm:copt:disc:wp}
  Problem~\eqref{eq:copt:disc} is well-posed: there is a unique solution $(f_h,u_h,\qq_h)\in V_h$ and $(y_h,\xxi_h)\in U_h$, and
  \begin{align*}
    \| (f_h,u_h,\qq_h) \|_V + \|(y_h,\xxi_h)\|_U \lesssim \alpha \| u_d \|_{L^2(Q)} + \beta \| u_{T,d} \|_{L^2(\Omega)} + \| u_0 \|_{L^2(\Omega)}.
  \end{align*}
  Furthermore, with $(f,u,\qq)\in V$ and $(y,\xxi)\in U$ being the solution of Theorem~\ref{thm:copt:wp}, we have the quasi-optimality
  \begin{align*}
    &\| (f,u,\qq) - (f_h,u_h,\qq_h) \|_V + \| (y,\xxi) - (y_h,\xxi_h) \|_U\\
    &\qquad\qquad\qquad\qquad\lesssim \inf_{\substack{(\wilde f_h,\wilde u_h, \wilde\qq_h)\in V_h\\(\wilde y_h,\wilde \xxi_h)\in U_h}}
    \| (f,u,\qq) - (\wilde f_h,\wilde u_h,\wilde \qq_h) \|_V + \| (y,\xxi) - (\wilde y_h,\wilde \xxi_h) \|_U
  \end{align*}
\end{theorem}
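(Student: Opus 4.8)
The plan is to establish discrete well-posedness by verifying Brezzi's conditions \emph{at the discrete level}, essentially repeating the proof of Theorem~\ref{thm:copt:wp} with the continuous spaces replaced by $V_h$ and $U_h$, and then to deduce quasi-optimality from the abstract theory of conforming mixed methods, cf.~\cite{BoffiBF_13}. Since $a$ and $\wilde b$ are the same bilinear forms as before, their boundedness on $V_h\times V_h$ and $V_h\times U_h$ is inherited directly from the continuous estimates, which use only Cauchy--Schwarz and the time-trace theorem~\eqref{eq:timetrace}. The crux is therefore to reproduce the two stability estimates with constants that do not deteriorate under refinement.

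For the discrete inf-sup condition of $\wilde b$, I would take an arbitrary $(v_h,\ppsi_h)\in U_h$ and test with $(f_h,u_h,\qq_h)=(0,v_h,\ppsi_h)$, so that $\wilde b(0,v_h,\ppsi_h;v_h,\ppsi_h)=b(v_h,\ppsi_h;v_h,\ppsi_h)\gtrsim \norm{(v_h,\ppsi_h)}{U}^2$ by the coercivity of $b$ from Lemma~\ref{lem:b:ell}. For the discrete coercivity of $a$ on the discrete kernel, I would take $(f_h,u_h,\qq_h)\in\ker_h\wilde b$, that is $b(u_h,\qq_h;v_h,\ppsi_h)=\vdual{f_h}{\partial_t v_h-\div\,\ppsi_h}$ for all $(v_h,\ppsi_h)\in U_h$, and insert the test function $(v_h,\ppsi_h)=(u_h,\qq_h)$ to obtain $\norm{(u_h,\qq_h)}{U}\lesssim \norm{f_h}{L^2(Q)}$; since $a(f_h,u_h,\qq_h;f_h,u_h,\qq_h)\geq \lambda\norm{f_h}{L^2(Q)}^2$, the full $V$-norm is then controlled. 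Because both constants originate solely from the coercivity and boundedness constants of $b$ and from $\lambda$, none of which depends on $h$, the discrete Brezzi conditions hold uniformly, and~\cite[Thm~4.2.3]{BoffiBF_13} yields the unique discrete solution together with the asserted stability bound.

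The point requiring care — and what makes the claim of no restrictions meaningful — is precisely that the two test functions used above, $(0,v_h,\ppsi_h)$ and $(u_h,\qq_h)$, must themselves be admissible, i.e.\ $\{0\}\times U_h\subseteq V_h$ and the $U$-component of every element of $V_h$ must lie in $U_h$. This holds for the natural product discretization $V_h=\mathcal{F}_h\times U_h$ used here, and it is exactly this structural compatibility, rather than any inf-sup pairing between independently chosen spaces, that makes the continuous argument transfer verbatim. This is the main obstacle to watch, since once it is secured all stability constants are inherited unchanged from the continuous level.

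Finally, for quasi-optimality I would invoke the abstract C\'ea-type estimate for mixed problems. Conformity $V_h\subset V$, $U_h\subset U$ gives Galerkin orthogonality against all discrete test functions, and combining it with the uniform discrete inf-sup and kernel-coercivity constants established above, the boundedness of $a$ and $\wilde b$, and the continuous stability from Theorem~\ref{thm:copt:wp}, the standard theory of mixed finite element methods (cf.~\cite{BoffiBF_13}) bounds the total error $\norm{(f,u,\qq)-(f_h,u_h,\qq_h)}{V}+\norm{(y,\xxi)-(y_h,\xxi_h)}{U}$ by the best approximation error over $V_h\times U_h$, with a constant depending only on those uniform stability and boundedness constants.
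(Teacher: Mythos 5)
Your proposal follows essentially the same route as the paper: the paper's proof simply verifies the Babu\v{s}ka-Brezzi conditions on the discrete level, noting that since $b$ is coercive the arguments of Theorem~\ref{thm:copt:wp} carry over, and then cites the standard theory for the quasi-optimality estimate. Your extra observation --- that the test choices $(0,v_h,\ppsi_h)$ and $(u_h,\qq_h)$ are only admissible because of the product structure $V_h=\pP^0(\tT_h)\times U_h$ in~\eqref{eq:discretespaces}, so that the ``no restrictions'' claim really rests on this structural compatibility between $V_h$ and $U_h$ --- is correct and makes explicit a hypothesis the paper leaves implicit in its ``same arguments'' remark.
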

\begin{proof}
  Again we verify the conditions of the Babu\v{s}ka-Brezzi theory, but on the discrete level, cf.~\cite[Thms.~5.2.1 and 5.2.2]{BoffiBF_13}.
  As the bilinear form $b$ is coercive, we can employ the same arguments as in the proof of Theorem~\ref{thm:copt:wp}.
  The quasi-optimality estimate is a standard consecuence in coercive Galerkin settings.
\end{proof}
%--------------------------------------------------------------------------------------------------------
The least-squares structure for $b$ allows us to develop an a-posteriori error estimator which is reliable and efficient.
\begin{theorem}\label{thm:apost}
  Let $V_h\subset V$ and $U_h\subset U$ be closed subspaces.
  Let $(f,u,\qq)\in V$, $(y,\xxi)\in U$ be the solution of~\eqref{eq:copt:var}
  and $(f_h,u_h,\qq_h)\in V_h$ and $(y_h,\xxi_h)\in U_h$ be the solution of~\eqref{eq:copt:disc}.
  Let $\wilde p$ be given by
  \begin{align*}
      -\partial_t \wilde p -\Delta \wilde p &= \alpha \left( u_h-u_d \right)\text{ in } Q,\\
      \wilde p &= 0 \text{ on } J\times \partial\Omega,\\
      \wilde p(T) &= \beta \left( u_h(T)-u_{T,d} \right) \text{ on } \Omega.
  \end{align*}
  Define the a posteriori error estimator
  \begin{align*}
    \eta^2 &:= \| \lambda^{-1}(\partial_ty_h-\div\,\xxi_h) - f_h \|_Q^2 + \| \qq_h-\nabla u_h \|_{Q}^2
    + \| \partial_t u_h - \div\,\qq_h-f_h \|_Q^2 + \| u_h(0) - u_0 \|_\Omega^2\\
    &\qquad+ \| \xxi_h-\nabla y_h - \nabla\wilde p \|_{Q}^2 + \| \partial_t y_h - \div\,\xxi_h+\wilde p \|_Q^2.
  \end{align*}
  The estimator is reliable and efficient, i.e.,
  \begin{align*}
    &\| f - f_h \|_Q + \| (u,\qq)-(u_h,\qq_h) \| + \| (y,\xxi) - (y_h,\xxi_h) \|_U \simeq \eta.
  \end{align*}
\end{theorem}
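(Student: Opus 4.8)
The plan is to avoid estimating the three error contributions separately and instead deduce everything from the uniform stability of the saddle-point system. Since $a$ and $\wilde b$ are bounded and satisfy the Babu\v{s}ka--Brezzi conditions verified in the proof of Theorem~\ref{thm:copt:wp} with constants independent of any discretization, problem~\eqref{eq:copt:var} is well-posed, and the standard a posteriori theory for inf-sup stable problems yields
\[
  \| f-f_h \|_Q + \| (u,\qq)-(u_h,\qq_h) \|_U + \| (y,\xxi)-(y_h,\xxi_h) \|_U \simeq \| \mathrm{Res}_1 \|_{V'} + \| \mathrm{Res}_2 \|_{U'},
\]
where $\mathrm{Res}_1\in V'$ and $\mathrm{Res}_2\in U'$ are the residuals of the two equations in~\eqref{eq:copt:var} evaluated at the discrete solution of~\eqref{eq:copt:disc}. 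The reliability bound ($\lesssim$) comes from the inf-sup constant and the efficiency bound ($\gtrsim$) from boundedness of the forms; crucially, this reduction already encodes the coupling between state, adjoint, and control, so no circular estimate between them has to be closed by hand. It then remains to prove $\| \mathrm{Res}_1 \|_{V'} + \| \mathrm{Res}_2 \|_{U'} \simeq \eta$.

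I would evaluate the two dual norms using the coercive least-squares structure of $b$ from Lemma~\ref{lem:b:ell}. For the constraint residual the $f_h$-terms cancel and one finds $\mathrm{Res}_2(\cdot)=b\big((\wat u,\wat\qq)-(u_h,\qq_h)\,;\,\cdot\big)$, where $(\wat u,\wat\qq)\in U$ is the \emph{exact} least-squares state with data $f_h,u_0$. Coercivity and boundedness give $\|\mathrm{Res}_2\|_{U'}\simeq\|(\wat u,\wat\qq)-(u_h,\qq_h)\|_U$, and expanding the energy norm $b(\cdot;\cdot)$ reproduces exactly $\|\qq_h-\nabla u_h\|_Q^2+\|\partial_tu_h-\div\,\qq_h-f_h\|_Q^2+\|u_h(0)-u_0\|_\Omega^2$. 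For $\mathrm{Res}_1$ the $L^2(Q)$-component is $\lambda\vdual{g}{\lambda^{-1}(\partial_ty_h-\div\,\xxi_h)-f_h}_Q$, with dual norm $\lambda$ times the first term of $\eta$, while the $U$-component equals $b\big(\cdot\,;(\wat y,\wat\xxi)-(y_h,\xxi_h)\big)$ with $(\wat y,\wat\xxi)\in U$ the exact multiplier solving~\eqref{lem:lagmult:eq1} with $u$ replaced by $u_h$; by Lemma~\ref{lem:lagmult} this is the first-order system with the computable surrogate $\wilde p$ in place of $p$. Expanding its energy norm produces $\|\xxi_h-\nabla y_h-\nabla\wilde p\|_Q^2+\|\partial_ty_h-\div\,\xxi_h+\wilde p\|_Q^2$, i.e. the last two terms of $\eta$.

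The surrogate $\wilde p$ is what makes $\eta$ computable, and the substantive estimate is that replacing the true adjoint $p$ by $\wilde p$ costs only the state error. Since $p$ and $\wilde p$ solve the same backward heat equation~\eqref{eq:copt:transpose} with right-hand sides $\alpha(u-u_d)$ and $\alpha(u_h-u_d)$ and final values $\beta(u(T)-u_{T,d})$ and $\beta(u_h(T)-u_{T,d})$, the difference solves it with data $\alpha(u-u_h)$ and $\beta(u(T)-u_h(T))$. The parabolic energy estimate, the time-trace theorem~\eqref{eq:timetrace}, and~\eqref{eq:FK:GS} then give $\|p-\wilde p\|_{L^2(H^1_0(\Omega))\cap H^1(H^{-1}(\Omega))}\lesssim\|(u,\qq)-(u_h,\qq_h)\|_U$. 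This is exactly what is needed to prove efficiency of the last two terms of $\eta$, where one subtracts the exact identities $\qq=\nabla u$, $\xxi=\nabla y+\nabla p$, and $\partial_ty-\div\,\xxi=-p$ from Lemma~\ref{lem:lagmult} and bounds the resulting $\nabla(p-\wilde p)$ and $p-\wilde p$ contributions; the remaining four terms of $\eta$ are handled by the triangle inequality applied to the corresponding exact identities together with the trace bound on $u(T)-u_h(T)$.

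The step I expect to be the main obstacle is the careful bookkeeping in matching $\|\mathrm{Res}_1\|_{V'}$ to $\eta$: the energy norm of $(\wat y,\wat\xxi)-(y_h,\xxi_h)$ carries, besides the two displayed least-squares terms, the initial-trace contribution coming from the term $\vdual{w(0)}{y(0)}_\Omega$ in $b$ (the multiplier satisfies the natural condition $\wat y(0)=-\wilde p(0)$). One therefore has to show that this initial-time contribution is controlled consistently --- using $y(0)=-p(0)$ from Lemma~\ref{lem:lagmult} and the $\wilde p$-stability bound above --- so that the equivalence closes against the stated $\eta$ and the inf-sup reduction of the first paragraph can be invoked without an uncontrolled remainder.
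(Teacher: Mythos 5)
Your overall strategy is genuinely different from the paper's, and where it works it is cleaner. The paper never invokes the global inf-sup stability of the saddle-point operator: it proves reliability by hand, introducing the auxiliary exact solutions $(\wilde u,\wilde\qq)$ (state with datum $f_h$) and $(\wilde y,\wilde\xxi)$ (multiplier with datum $u_h$), and then spends the bulk of the proof on the one genuinely coupled quantity $\|f-\wilde f\|_Q$, $\wilde f:=\lambda^{-1}(\partial_t y_h-\div\,\xxi_h)$, which requires a further auxiliary state $(\wat u,\wat\qq)$, a symmetry/duality identity, and a sign argument discarding the negative terms $\vdual{u-\wat u}{\wat u-u}_Q$. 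Your reduction $\mathrm{error}\simeq\|\mathrm{Res}_1\|_{V'}+\|\mathrm{Res}_2\|_{U'}$ absorbs exactly this coupling into the Brezzi constants of Theorem~\ref{thm:copt:wp}, and your evaluation of the residual dual norms (via coercivity of $b$) then uses the same two auxiliary solutions the paper uses; the paper's most technical step disappears. Your efficiency argument also parallels the paper's reverse direction and is fine.

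However, the obstacle you flag at the end is a genuine gap, not bookkeeping, and the fix you sketch cannot close it. The $U$-component of $\mathrm{Res}_1$ has dual norm $\simeq\|(\wilde y,\wilde\xxi)-(y_h,\xxi_h)\|_U$, whose expansion is
\begin{align*}
\|\xxi_h-\nabla y_h-\nabla\wilde p\|_Q^2+\|\partial_t y_h-\div\,\xxi_h+\wilde p\|_Q^2+\|y_h(0)+\wilde p(0)\|_\Omega^2,
\end{align*}
and the last term is absent from $\eta$. Reliability therefore needs $\|y_h(0)+\wilde p(0)\|_\Omega\lesssim\eta$; what your remedy (using $y(0)=-p(0)$ and $\|(p-\wilde p)(0)\|_\Omega\lesssim\|(u,\qq)-(u_h,\qq_h)\|_U$) actually yields is $\|y_h(0)+\wilde p(0)\|_\Omega\lesssim\mathrm{error}$, i.e.\ the efficiency direction --- inserted into the reliability chain this bounds the error by the error. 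Moreover, no estimate $\|y_h(0)+\wilde p(0)\|_\Omega\lesssim\eta$ can hold uniformly over arbitrary closed subspaces: the two multiplier terms of $\eta$ annihilate every caloric pair $(w,\nabla w)$ with $\partial_t w-\Delta w=0$, regardless of $w(0)$. Concretely, taking $U_h=\mathrm{span}\{(z,\nabla z)\}$ with $z$ caloric, $z(0)=\phi_k$ a high Dirichlet eigenfunction, $V_h=\{0\}\times U_h$, $u_0=0$, $\beta=0$, and data for which $\alpha(u-u_d)$ is concentrated near $t=0$ in the direction $\phi_1$, one gets every term of $\eta$ arbitrarily small while $\|(y,\xxi)-(y_h,\xxi_h)\|_U\gtrsim\|p(0)\|_\Omega$ remains of order one. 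So your proof cannot terminate with the stated $\eta$; it does prove the theorem for the augmented estimator $\eta^2+\|y_h(0)+\wilde p(0)\|_\Omega^2$. You should know that the paper's own proof has the same defect: its final claimed equivalence for $\|(\wilde y,\wilde\xxi)-(y_h,\xxi_h)\|_U$ silently drops the initial-trace term (note that the analogous state term $\|u_h(0)-u_0\|_\Omega$ \emph{is} kept). In other words, you located a real flaw in the statement itself, but your proposal as written does not repair it.
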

\begin{proof}
  Let $(\wilde y,\wilde\xxi)\in U$ be given by
  \begin{align*}
    b(\wilde y,\wilde \xxi; w,\pphi) = \alpha\vdual{u_d-u_h}{w}_Q + \beta\vdual{u_{T,d}-u_h(T)}{w(T)}_\Omega  \quad\text{ for all } (w,\pphi)\in U.
  \end{align*}
  Then, 
  \begin{align}\label{eq:proof3:eq1}
    b( \wilde y-y,\wilde \xxi-\xxi; w,\pphi) =\alpha\vdual{u-u_h}{w}_Q + \beta\vdual{u(T)-u_h(T)}{w(T)}_\Omega  \quad\text{ for all } (w,\pphi)\in U,
  \end{align}
  and due to coercivity of $b$, the Cauchy--Schwarz inequality, the time-trace theorem~\eqref{eq:timetrace}, and~\eqref{eq:FK:GS}, we obtain
  \begin{align}\label{thm:apost:eq3}
    \| (y,\xxi)-(\wilde y,\wilde \xxi) \| \lesssim \| u - u_h \|_{L^2(H^1_0(\Omega))\cap H^1(H^{-1}(\Omega))}
    \lesssim \| (u,\qq) - (u_h,\qq_h) \|_U.
  \end{align}
  The triangle inequality then gives
  \begin{align*}
    \| (y,\xxi)-(y_h,\xxi_h) \|_U
    %&\leq \| (y,\xxi)-(\wilde y,\wilde \xxi) \|_U + \| (\wilde y,\wilde \xxi) - (y_h,\xxi_h) \|_U \\ 
    & \lesssim \| (u,\qq) - (u_h,\qq_h) \|_U + \| (\wilde y,\wilde \xxi) - (y_h,\xxi_h) \|_U.
  \end{align*}
  Next, let $(\wilde u,\wilde \qq)\in U$ be the solution of
  \begin{align*}
    b(\wilde u,\wilde\qq;v,\ppsi) = \vdual{f_h}{\partial_tv-\div\,\ppsi}_Q + \vdual{u_0}{v(0)}_\Omega \quad\text{ for all }
    (v,\ppsi)\in U.
  \end{align*}
  Then,
  \begin{align*}
    b(\wilde u - u,\wilde\qq - \qq;v,\ppsi) = \vdual{f_h-f}{\partial_tv-\div\,\ppsi}_Q \quad\text{ for all } (v,\ppsi)\in U,
  \end{align*}
  and due to coercivity of $b$ and the Cauchy--Schwarz inequality, we obtain
  \begin{align}\label{thm:apost:eq4}
    \| (u,\qq)-(\wilde u,\wilde \qq) \|_U \lesssim \| f - f_h \|_Q.
  \end{align}
  The triangle inequality then gives
  \begin{align*}
    \| (u,\qq)-(u_h,\qq_h) \|_U
    %&\leq \| (u,\qq)-(\wilde u,\wilde \qq) \|_U + \| (\wilde u,\wilde \qq)-(u_h,\qq_h) \|_U\\
    &\lesssim \| f - f_h \|_Q + \| (\wilde u,\wilde \qq)-(u_h,\qq_h) \|_U.
  \end{align*}
  Next, we will bound $\| f - f_h \|_Q$.
  To that end, define $\wilde f\in L^2(Q)$ by
  $\wilde f = \lambda^{-1}(\partial_ty_h-\div\,\xxi_h)$.
  Note that from~\eqref{eq:copt:var} it follows that $\partial_t y - \div\,\xxi=\lambda f$,
  and hence
  \begin{align}\label{thm:apost:eq1}
    \lambda (f-\wilde f) = \partial_t(y-y_h)-\div\,(\xxi-\xxi_h).
  \end{align}
  Now let $(\wat u,\wat \qq)\in U$ be the least-squares projection
  \begin{align*}
    b(\wat u,\wat\qq;v,\ppsi) = \vdual{\wilde f}{\partial_tv-\div\,\ppsi}_Q + \vdual{u_0}{v(0)}_\Omega \quad\text{ for all }
    (v,\ppsi)\in U.
  \end{align*}
  Then, 
  \begin{align}\label{eq:proof3:eq2}
    b(\wat u - u,\wat\qq - \qq;v,\ppsi) = \vdual{\wilde f-f}{\partial_tv-\div\,\ppsi}_Q \quad\text{ for all } (v,\ppsi)\in U.
  \end{align}
  Setting $(w,\pphi) = (\wat u-u,\wat\qq-\qq)$ in~\eqref{eq:proof3:eq1} and
  $(v,\ppsi) = (\wilde y-y,\wilde\xxi-\xxi)$ in~\eqref{eq:proof3:eq2} we obtain
  \begin{align}\label{thm:apost:eq2}
    \vdual{\wilde f-f}{\partial_t(\wilde y-y)-\div\,(\wilde\xxi-\xxi)}_Q
    = \alpha\vdual{u-u_h}{\wat u-u}_Q + \beta \vdual{u(T)-u_h(T)}{\wat u(T)-u(T)}_\Omega
  \end{align}
  due to the symmetry of $b$.
  Then we calculate, using~\eqref{thm:apost:eq1} and~\eqref{thm:apost:eq2},
  \begin{align*}
    \lambda \vdual{f-\wilde f}{f-\wilde f}_Q &= \vdual{f-\wilde f}{\partial_t(y-y_h)-\div\,(\xxi-\xxi_h)}_Q\\
    &= \vdual{f-\wilde f}{\partial_t(y-\wilde y)-\div\,(\xxi-\wilde \xxi)}_Q\\
    &\qquad+ \vdual{f-\wilde f}{\partial_t(\wilde y-y_h)-\div\,(\wilde\xxi-\xxi_h)}_Q\\
    &= \alpha\vdual{u-u_h}{\wat u-u}_Q + \beta \vdual{u(T)-u_h(T)}{\wat u(T)-u(T)}_\Omega\\
    &\qquad + \vdual{f-\wilde f}{\partial_t(\wilde y-y_h)-\div\,(\wilde\xxi-\xxi_h)}_Q\\
    &= \alpha\vdual{u-\wat u}{\wat u-u}_Q + \alpha\vdual{\wat u-u_h}{\wat u-u}_Q\\
    &\qquad + \beta \vdual{u(T)-\wat u(T)}{\wat u(T)-u(T)}_\Omega + \beta \vdual{\wat u(T)-u_h(T)}{\wat u(T)-u(T)}_\Omega\\
    &\qquad + \vdual{f-\wilde f}{\partial_t(\wilde y-y_h)-\div\,(\wilde\xxi-\xxi_h)}_Q\\
    &\leq \alpha \vdual{\wat u-u_h}{\wat u-u}_Q + \beta \vdual{\wat u(T)-u_h(T)}{\wat u(T)-u(T)}_\Omega\\
    &\qquad + \vdual{f-\wilde f}{\partial_t(\wilde y-y_h)-\div\,(\wilde\xxi-\xxi_h)}_Q
  \end{align*}
  where the last estimate follows since the terms $\vdual{u-\wat u}{\wat u-u}_Q$ and
  $\vdual{u(T)-\wat u(T)}{\wat u(T)-u(T)}_\Omega$ are negative. We obtain with Cauchy--Schwarz,
  the time trace theorem~\eqref{eq:timetrace}, and~\eqref{eq:FK:GS},
  \begin{align*}
  \| f - \wilde f \|_Q^2 \lesssim \|(\wat u,\wat\qq) - (u_h,\qq_h)\|_U\|(\wat u,\wat\qq) - (u,\qq) \|_U
    + \| f - \wilde f \|_Q \| (\wilde y,\wilde\xxi) - (y_h,\xxi_h) \|_U.
  \end{align*}
  Note that due to~\eqref{eq:proof3:eq2} and the coercivity of $b$, the estimate
  $\| (\wat u,\wat\qq) - (u,\qq) \|_U \lesssim \| \wilde f - f\|_Q$ holds, and analogously we have
  $\| (\wat u,\wat\qq) - (\wilde u,\wilde \qq) \|_U \lesssim \| \wilde f - f_h\|_Q$.
  We conclude
  \begin{align*}
  \| f - \wilde f \|_Q &\lesssim \|(\wat u,\wat\qq) - (u_h,\qq_h)\|_U + \| (\wilde y,\wilde\xxi) - (y_h,\xxi_h) \|_U\\
  &\lesssim \| \wilde f - f_h\|_Q + \| (\wilde u,\wilde\qq) - (u_h,\qq_h) \|_U + \| (\wilde y,\wilde\xxi) - (y_h,\xxi_h) \|_U.
  \end{align*}
  Finally,
  \begin{align*}
    \| f - f_h \|_Q &+ \| (u,\qq)-(u_h,\qq_h) \| + \| (y,\xxi) - (y_h,\xxi_h) \|_U\\
    &\lesssim \| f - f_h \|_Q + \| (\wilde u,\wilde \qq)-(u_h,\qq_h) \|_U + \| (\wilde y,\wilde\xxi) - (y_h,\xxi_h)\|_U \\
  &\lesssim \| f - \wilde f \|_Q + \|  \wilde f - f_h \|_Q + \| (\wilde u,\wilde \qq)-(u_h,\qq_h) \|_U + \| (\wilde y,\wilde\xxi) - (y_h,\xxi_h)\|_U \\
    &\lesssim \| \wilde f - f_h \|_Q + \| (\wilde u,\wilde \qq)-(u_h,\qq_h) \|_U + \| (\wilde y,\wilde\xxi) - (y_h,\xxi_h) \|_U.
  \end{align*}
  We can also prove the reverse inequality
  \begin{align*}
    &\| \wilde f - f_h \|_Q + \| (\wilde u,\wilde \qq)-(u_h,\qq_h) \|_U + \| (\wilde y,\wilde\xxi) - (y_h,\xxi_h) \|_U\\
    &\qquad\lesssim\| f - f_h \|_Q + \| (u,\qq)-(u_h,\qq_h) \| + \| (y,\xxi) - (y_h,\xxi_h) \|_U,
  \end{align*}
  which is easily seen using~\eqref{thm:apost:eq3},~\eqref{thm:apost:eq4}, and~\eqref{thm:apost:eq1}
  \begin{align*}
    \| (y_h,\xxi_h) - (\wilde y,\wilde \xxi) \|_U &\leq \| (y_h,\xxi_h) - (y,\xxi) \|_U + \| (y,\xxi) - (\wilde y,\wilde\xxi) \|_U
    \lesssim \| (y_h,\xxi_h) - (y,\xxi) \|_U + \| (u,\qq) - (u_h,\qq_h) \|_U\\
    \| (u_h,\qq_h) - (\wilde u, \wilde \qq) \|_U &\leq \| (u_h,\qq_h) - (u,\qq) \|_U + \| (u,\qq) - (\wilde u,\wilde\qq) \|_U
    \leq \| (u_h,\qq_h) - (u,\qq) \|_U + \| f-f_h \|_Q\\
    \| \wilde f - f_h \|_Q &\leq \| \wilde f - f \|_Q + \| f - f_h \|_Q \leq \| (y_h,\xxi_h) - (y,\xxi) \|_U + \| f - f_h \|_U.
  \end{align*}
  To conlude the proof, we note that $(u_h,\qq_h)$ is the least-squares projection of $(\wilde u,\wilde \qq)$, while $(y_h,\xxi_h)$ is the least-squares
  projection of $(\wilde y,\wilde\xxi)$. Hence,
  \begin{align*}
    \| (\wilde u,\wilde \qq)-(u_h,\qq_h) \|_U &\simeq \| \qq_h-\nabla u_h \|_{Q} + \| \partial_t u_h - \div\,\qq_h-f_h \|_Q + \| u_h(0) - u_0 \|_\Omega,\\
    \| (\wilde y,\wilde \xxi)-(y_h,\xxi_h) \|_U &\simeq \| \xxi_h-\nabla y_h - \wilde p  \|_{Q} + \| \partial_t y_h - \div\,\xxi_h+\wilde p \|_Q,
  \end{align*}
  concludes the proof.
\end{proof}
%--------------------------------------------------------------------------------------------------------
To obtain a fully computable estimator we have to approximate $\wilde p$ in Theorem~\ref{thm:apost}: Following the spirit of this paper we
consider a least-squares discretization. Changing signs accordingly, we may apply the same arguments as in Section~\ref{sec:heat}: Let 
\begin{align*}
  U^\star :=
  \left\{ (v,\ppsi)\mid v \in L^2(H^1_0(\Omega)), \ppsi\in L^2(Q), -\partial_t v -\div\,\ppsi\in L^2(Q) \right\},
\end{align*}
and note that $U^\star$ is a Hilbert space with norm 
\begin{align*}
  \|(v,\ppsi)\|_{U^\star}^2 = \|v\|_{L^2(H^1(\Omega))}^2 + \|\partial_t v+\div\,\ppsi\|_Q^2 + \|v(T)\|_\Omega^2,
\end{align*}
cf.~\cite[Sec.~2]{GantnerS_21}.
The first-order system for the continuous problem from Theorem~\ref{thm:apost} then is
\begin{subequations}\label{eq:adjointTilde}
\begin{alignat}{2}
  -\partial_t \wilde p - \div\,\wilde{\boldsymbol\chi} &= \alpha(u_h-u_d) &\quad&\text{in }Q, \\
  \nabla \wilde p - \wilde{\boldsymbol\chi} &= 0 &\quad&\text{in }Q, \\ 
  \wilde p&= 0 &\quad&\text{on }J\times \partial\Omega, \\
  \wilde p(T) &= \beta(u_h(T)-u_{T,d}) &\quad&\text{in }\Omega.
\end{alignat}
\end{subequations}
This problem admits a unique solution $(\wilde p,\wilde{\boldsymbol\chi})\in U^\star$. Moreover, the left-hand side of~\eqref{eq:adjointTilde} defines a boundedly invertible operator, i.e.,
\begin{align*}
  \|(v,\ppsi)\|_{U^\star}^2 \simeq \|\nabla v-\ppsi\|_Q^2 + \|\partial_t v+\div\,\ppsi\|_Q^2 + \|v(T)\|_\Omega^2.
\end{align*}
In particular, the solution of~\eqref{eq:adjointTilde} satisfies
\begin{align*}
  \|(\wilde p,\wilde{\boldsymbol\chi}\|_{U^\star} \lesssim \|u_h-u_d\|_Q + \|u_h(T)-u_{T,d}\|_\Omega.
\end{align*}
As already mentioned, the proof of the latter claims follow the argumentations as presented in Section~\ref{sec:heat} and are therefore omitted.
%--------------------------------------------------------------------------------------------------------
\begin{cor}
  Under the assumptions and notations from Theorem~\ref{thm:apost}, let $(p,\boldsymbol\chi):=(p,\nabla p)\in U^\star$ where $p$
  denotes the solution of the adjoint problem~\eqref{eq:copt:transpose} and let $(p_h,{\boldsymbol\chi}_h)\in U^\star$ be arbitrary. 
  Define 
  \begin{align*}
    \widetilde\eta^2 &= \| \lambda^{-1}(\partial_ty_h-\div\,\xxi_h) - f_h \|_Q^2 + \| \qq_h-\nabla u_h \|_{Q}^2
    + \| \partial_t u_h - \div\,\qq_h-f_h \|_Q^2 + \| u_h(0) - u_0 \|_\Omega^2\\
    &\qquad+ \| \xxi_h-\nabla y_h - {\boldsymbol\chi}_h \|_{Q}^2 + \| \partial_t y_h - \div\,\xxi_h+ p_h \|_Q^2 \\
    &\qquad+ \|\alpha(u_h-u_d) + \partial_t  p_h + \div{\boldsymbol\chi}_h\|_Q^2
    + \|\nabla  p_h-{\boldsymbol\chi}_h\|_Q^2 + \| p_h(T) -\beta(u_h(T)-u_d)\|_\Omega^2.
  \end{align*}
  Then, $\wilde\eta$ is reliable and efficient, i.e.,
  \begin{align*}
    \| f - f_h \|_Q + \| (u,\qq)-(u_h,\qq_h) \|_U + \| (y,\xxi) - (y_h,\xxi_h) \|_U +
    \|(p,{\boldsymbol\chi})-(p_h,{\boldsymbol\chi}_h)\|_{U^\star} \simeq \widetilde\eta.
  \end{align*}
\end{cor}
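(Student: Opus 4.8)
The plan is to deduce the corollary from Theorem~\ref{thm:apost} by showing that $\widetilde\eta$ is equivalent to $\eta$ plus the $U^\star$-distance $E:=\|(p_h,{\boldsymbol\chi}_h)-(\wilde p,\wilde{\boldsymbol\chi})\|_{U^\star}$ of the arbitrary pair from the auxiliary adjoint $(\wilde p,\wilde{\boldsymbol\chi})$ solving~\eqref{eq:adjointTilde}, and then observing that this extra distance is, up to the already-present state error, the same as $\|(p,{\boldsymbol\chi})-(p_h,{\boldsymbol\chi}_h)\|_{U^\star}$. Since the first four terms of $\widetilde\eta^2$ literally coincide with those of $\eta^2$, all the work is in the last five terms.

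First I would identify the three newly added terms of $\widetilde\eta^2$ as the $U^\star$-least-squares residual of~\eqref{eq:adjointTilde} for $(p_h,{\boldsymbol\chi}_h)$. Setting $(v,\ppsi):=(p_h-\wilde p,{\boldsymbol\chi}_h-\wilde{\boldsymbol\chi})$ and subtracting the equations~\eqref{eq:adjointTilde} gives $\nabla v-\ppsi=\nabla p_h-{\boldsymbol\chi}_h$, $\partial_t v+\div\,\ppsi=\alpha(u_h-u_d)+\partial_t p_h+\div\,{\boldsymbol\chi}_h$, and $v(T)=p_h(T)-\beta(u_h(T)-u_{T,d})$. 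Hence the boundedly invertible operator property recorded just above the corollary yields that the sum of these three terms is equivalent to $\|(v,\ppsi)\|_{U^\star}^2=E^2$. I would also note here, for later use, that although the $U^\star$-norm does not contain $\|\ppsi\|_Q$ directly, the same equivalence gives $\|{\boldsymbol\chi}_h-\wilde{\boldsymbol\chi}\|_Q\le\|\nabla v-\ppsi\|_Q+\|\nabla v\|_Q\lesssim E$, while $\|p_h-\wilde p\|_Q\le\|v\|_{L^2(H^1(\Omega))}\le E$.

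Next I would compare the two middle terms of $\widetilde\eta$ and $\eta$. Because the second equation of~\eqref{eq:adjointTilde} gives $\nabla\wilde p=\wilde{\boldsymbol\chi}$, the triangle inequality and the two flux/potential bounds from the previous step show $\|\xxi_h-\nabla y_h-{\boldsymbol\chi}_h\|_Q\le\|\xxi_h-\nabla y_h-\nabla\wilde p\|_Q+E$ and $\|\partial_t y_h-\div\,\xxi_h+p_h\|_Q\le\|\partial_t y_h-\div\,\xxi_h+\wilde p\|_Q+E$, with the reverse inequalities holding symmetrically. Combining this with the equivalence of the last three terms to $E^2$ and the coincidence of the first four terms, I obtain $\widetilde\eta\simeq\eta+E$.

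It remains to trade $E$ for $\|(p,{\boldsymbol\chi})-(p_h,{\boldsymbol\chi}_h)\|_{U^\star}$. The exact adjoint $(p,\nabla p)$ solves the first-order form of~\eqref{eq:copt:transpose} with data $\alpha(u-u_d),\beta(u(T)-u_{T,d})$, whereas $(\wilde p,\wilde{\boldsymbol\chi})$ solves~\eqref{eq:adjointTilde} with data $\alpha(u_h-u_d),\beta(u_h(T)-u_{T,d})$, so their difference is driven by $\alpha(u-u_h)$ and $\beta(u(T)-u_h(T))$. Stability of~\eqref{eq:adjointTilde}, the time-trace theorem~\eqref{eq:timetrace}, and~\eqref{eq:FK:GS} then give $\|(p,{\boldsymbol\chi})-(\wilde p,\wilde{\boldsymbol\chi})\|_{U^\star}\lesssim\|u-u_h\|_Q+\|u(T)-u_h(T)\|_\Omega\lesssim\|(u,\qq)-(u_h,\qq_h)\|_U$. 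A final triangle inequality then shows that $E$ and $\|(p,{\boldsymbol\chi})-(p_h,{\boldsymbol\chi}_h)\|_{U^\star}$ differ only by this already-controlled state error, so that $E+\|(u,\qq)-(u_h,\qq_h)\|_U\simeq\|(p,{\boldsymbol\chi})-(p_h,{\boldsymbol\chi}_h)\|_{U^\star}+\|(u,\qq)-(u_h,\qq_h)\|_U$; invoking Theorem~\ref{thm:apost}, which equates $\eta$ with $\|f-f_h\|_Q+\|(u,\qq)-(u_h,\qq_h)\|_U+\|(y,\xxi)-(y_h,\xxi_h)\|_U$, finishes the proof. I expect the main obstacle to be the first step together with its flux bound: verifying that the three extra terms are precisely the $U^\star$-residual and, in particular, recovering control of $\|{\boldsymbol\chi}_h-\wilde{\boldsymbol\chi}\|_Q$ (which is not part of the $U^\star$-norm) from the operator equivalence; the rest is careful bookkeeping and the structural observation that $\|(p,{\boldsymbol\chi})-(\wilde p,\wilde{\boldsymbol\chi})\|_{U^\star}$ is absorbed because it is dominated by the state error already appearing on both sides.
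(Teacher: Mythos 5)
Your proposal is correct and follows essentially the same route as the paper's proof: identify the three new terms of $\widetilde\eta^2$ as the least-squares residual of~\eqref{eq:adjointTilde}, so that they are equivalent to $\|(\widetilde p,\widetilde{\boldsymbol\chi})-(p_h,{\boldsymbol\chi}_h)\|_{U^\star}^2$ by the boundedly invertible operator property, deduce $\widetilde\eta^2\simeq\eta^2+\|(\widetilde p,\widetilde{\boldsymbol\chi})-(p_h,{\boldsymbol\chi}_h)\|_{U^\star}^2$ via triangle inequalities, trade $(\widetilde p,\widetilde{\boldsymbol\chi})$ for $(p,{\boldsymbol\chi})$ using stability of the adjoint problem together with~\eqref{eq:timetrace} and~\eqref{eq:FK:GS} (which bounds the swap by the state error), and conclude with Theorem~\ref{thm:apost}. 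In fact, your argument spells out details the paper compresses into ``by the triangle inequality,'' notably the recovery of $\|{\boldsymbol\chi}_h-\widetilde{\boldsymbol\chi}\|_Q\lesssim\|(\widetilde p,\widetilde{\boldsymbol\chi})-(p_h,{\boldsymbol\chi}_h)\|_{U^\star}$ even though $\|\ppsi\|_Q$ is not part of the $U^\star$-norm.
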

\begin{proof}
  Note the equivalence
  \begin{align*}
    &\|(\widetilde p,\widetilde{\boldsymbol\chi})-(p_h,{\boldsymbol\chi}_h)\|_{U^\star}^2 
    \\&\qquad \simeq
    \|\alpha(u_h-u_d) + \partial_t  p_h + \div\,{\boldsymbol\chi}_h\|_Q^2
    + \|\nabla p_h-{\boldsymbol\chi}_h\|_Q^2 + \| p_h(T) -\beta(u_h(T)-u_d)\|_\Omega^2
  \end{align*}
  for any $(p_h,{\boldsymbol\chi}_h)\in U^\star$, where $(\widetilde p,\widetilde{\boldsymbol\chi})\in U^\star$ is the solution of~\eqref{eq:adjointTilde}.
  By the triangle inequality we have that
    \begin{align}\label{eq:equivEtaTildeEta}
      \widetilde\eta^2 \simeq \eta^2 + \|(\widetilde p,\widetilde{\boldsymbol\chi})-(p_h,\boldsymbol\chi_h)\|_{U^\star}^2.
    \end{align}
    Using the triangle inequality and continuous dependence on the data we infer that
    \begin{align*}
      \|(p,\boldsymbol\chi)-(p_h,\boldsymbol\chi_h)\|_{U^\star} &\leq \|(p,{\boldsymbol\chi})-(\widetilde p,\widetilde{\boldsymbol\chi})\|_{U^\star} + \|(\widetilde p,\widetilde{\boldsymbol\chi})-(p_h,\boldsymbol\chi_h)\|_{U^\star}
      \\
      &\lesssim \|u-u_h\|_Q + \|u(T)-u_h(T)\|_\Omega + \|(\widetilde p,\widetilde{\boldsymbol\chi})-(p_h,\boldsymbol\chi_h)\|_{U^\star}
      \\
      &\lesssim \|(u,\qq)-(u,\qq_h)\|_U + \|(\widetilde p,\widetilde{\boldsymbol\chi})-(p_h,\boldsymbol\chi_h)\|_{U^\star}.
    \end{align*}
    Similarily, we obtain that
    \begin{align*}
      \|(\widetilde p,\widetilde{\boldsymbol\chi})-(p_h,\boldsymbol\chi_h)\|_{U^\star} \lesssim \|(u,\qq)-(u,\qq_h)\|_U + \|(p,\boldsymbol\chi)-(p_h,\boldsymbol\chi_h)\|_{U^\star}.
    \end{align*}
    Combining the latter two estimates together with~\eqref{eq:equivEtaTildeEta} and reliability as well as efficiency of $\eta$ (Theorem~\ref{thm:apost}) we conclude the proof.
\end{proof}
Clearly, in the last result we want that $(p_h,{\boldsymbol\chi}_h)\in U^\star$ is an approximation of the exact solution of~\eqref{eq:adjointTilde}.
%--------------------------------------------------------------------------------------------------------
\subsection{A lowest order discretization}
%--------------------------------------------------------------------------------------------------------
Let $\tT_h$ be a simplicial and admissible partition of the space-time cylinder $J\times\Omega$.
By admissible, we mean that there are no hanging nodes. Define
\begin{align*}
  \pP^0(\tT_h) &= \left\{ u\in L^2(J\times\Omega) \mid \forall K\in \tT_h:\; u|_K \text{ is constant} \right\},\\
  \sS^1(\tT_h) &= \left\{ u \in C(J\times\Omega) \mid u|_K \text{ a polynomial of degree at most } 1 \text{ for all
  } K\in \tT_h \right\},\\
  \sS^1_0(\tT_h) &= \left\{ u \in \sS^1(\tT_h) \mid u = 0 \text{ on } \overline J\times\partial\Omega  \right\}.
\end{align*}
Note that $\sS^1_0(\tT_h)$ is a subspace of $L^2(J;H^1_0(\Omega))\cap H^1(J;H^{-1}(\Omega))$,
and $[\sS^1(\tT_h)]^d$ is a subspace of $L^2(J\times\Omega)^d$.
Furthermore, if $v_h\in\sS^1_0(\tT_h)$ and $\ppsi_h\in[\sS^1(\tT_h)]^d$, then
$\partial_tv_h - \div\,\ppsi_h\in L^2(J\times \Omega)$.
Hence, we can define the discrete conforming subspaces
\begin{align}\label{eq:discretespaces}
  \begin{split}
  U_h &= \sS^1_0(\tT_h)\times [\sS^1(\tT_h)]^d \subset U,\\
  V_h &= \pP^0(\tT_h)\times U_h\subset V.
  \end{split}
\end{align}
In order to obtain a-priori error estimates, we will have to use structured meshes.
To that end, let $J_h=\{ j_1,j_2,\dots \}$ with $j_k=(t_k,t_{k+1})$ be a partition of the time interval $J$ into subintervals of length $h$,
and let $\Omega_h = \{\omega_1,\omega_2,\dots\}$ be a partition of physical space $\Omega\subset\R^d$ into $d$-simplices of diameter $h$.
The tensor-product mesh $J_h\otimes\Omega_h$ consists of elements which are space-time cylinders with
$d$-simplices from $\Omega_h$ as base. It is possible to construct from $J_h\otimes\Omega_h$ a simplicial, admissible mesh $\tT_h$,
following the recent work~\cite{NK_15}. To that end, suppose that the vertices of $\Omega_h$ are numbered like
$p_1,p_2, \dots, p_N$. An element $\omega\in\Omega_h$ can then be represented uniquely as the convex hull
\begin{align*}
  \omega = \textrm{conv}(p_{i^\omega_1}, \dots, p_{i^\omega_{d+1}}), \text{ with } i^\omega_k < i^\omega_\ell \text{ for } k<\ell.
\end{align*}
This ``local numbering'' of vertices is called \textit{consistent numbering} in the literature, cf.~\cite{Bey_00}.
An element $K=\omega\times j_k \in J_h\otimes\Omega_h$ can hence be written as convex hull
\begin{align*}
  K = \textrm{conv}(p_1', \dots, p_{d+1}',p_1'',\dots,p_{d+1}''),
\end{align*}
where
\begin{align*}
  p_\ell' = (p_{i^\omega_\ell},t_k), \quad p_\ell'' = (p_{i^\omega_\ell},t_{k+1}).
\end{align*}
We can split $K$ into $(d+1)$ different $d+1$-simplices
\begin{align}
  \begin{split}\label{int:simp:split}
  \textrm{conv}&(p_1',p_2',\dots, p_{d+1}',p_1'')\\
  \textrm{conv}&(p_2',\dots, p_{d+1}',p_1'',p_2'')\\
  &\vdots\\
  \textrm{conv}&(p_{d+1}',p_1'',p_2'',\dots,p_{d+1}''),
  \end{split}
\end{align}
and this way we obtain a simplicial triangulation $\tT_h$ of $J\times \Omega$.
The following result from~\cite[Thm.~1]{NK_15} holds.
\begin{theorem}
  The simplicial partition $\tT_h$ is admissible.
\end{theorem}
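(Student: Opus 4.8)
The plan is to prove that $\tT_h$ is \emph{conforming}, i.e. that any two of its closed simplices meet in a common (possibly empty) subsimplex; this is exactly the absence of hanging nodes. Since $\tT_h$ arises by subdividing each space-time prism $K=\omega\times j_k$ according to~\eqref{int:simp:split}, it suffices to establish conformity on three kinds of shared faces: (i) inside a single prism, among the simplices $S_m$; (ii) across a \emph{temporal} interface $\omega\times\{t_{k+1}\}$, shared by $\omega\times j_k$ and $\omega\times j_{k+1}$; and (iii) across a \emph{lateral} interface $F\times j_k$, shared by $\omega_1\times j_k$ and $\omega_2\times j_k$, where $F=\omega_1\cap\omega_2$ is a common $(d-1)$-face of $\Omega_h$. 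Prisms that share only a lower-dimensional face contribute intersections that are subfaces of the interfaces in (ii)--(iii), so they require no separate treatment once (i)--(iii) hold.

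For (i), write $S_m=\mathrm{conv}(p_m',\dots,p_{d+1}',p_1'',\dots,p_m'')$ for $m=1,\dots,d+1$, which is the Freudenthal--Kuhn staircase triangulation of the prism. Consecutive simplices $S_m,S_{m+1}$ share the $d$-dimensional face $S_m\setminus\{p_m'\}=S_{m+1}\setminus\{p_{m+1}''\}$, and the $S_m$ tile $K$ with pairwise disjoint interiors; this is the standard internal conformity of the Kuhn triangulation. For (ii), the bottom face of $\omega\times j_k$ is the undivided $d$-simplex $\mathrm{conv}(p_1',\dots,p_{d+1}')=\omega\times\{t_k\}$ and the top face is $\mathrm{conv}(p_1'',\dots,p_{d+1}'')=\omega\times\{t_{k+1}\}$. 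Hence the top face of $\omega\times j_k$ and the bottom face of $\omega\times j_{k+1}$ coincide as the single simplex $\omega\times\{t_{k+1}\}$, so temporal interfaces match trivially.

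The heart of the argument is (iii). I would show that the trace of the staircase triangulation on a lateral facet $F\times j_k$ is itself a staircase triangulation of $F\times j_k$, governed \emph{only} by the order induced on the vertices of $F$. If $F$ omits the local vertex index $j$, then $S_m$ meets $F\times j_k$ in a full $d$-face precisely when $m\neq j$: for $m<j$ the simplex $S_m$ contains $p_j'$ but not $p_j''$ and contributes the face $S_m\setminus\{p_j'\}$, for $m>j$ it contains $p_j''$ but not $p_j'$ and contributes $S_m\setminus\{p_j''\}$, while $S_j$ contains both $p_j'$ and $p_j''$ and meets the lateral facet only in codimension two. Re-indexing the $d$ vertices of $F$ in the induced order $q_1<\dots<q_d$, a direct bookkeeping identifies the traces for $m<j$ with $\mathrm{conv}(q_m',\dots,q_d',q_1'',\dots,q_m'')$ and those for $m>j$ with $\mathrm{conv}(q_{m-1}',\dots,q_d',q_1'',\dots,q_{m-1}'')$, so the $d$ traces are exactly the staircase simplices of the prism $F\times j_k$ over the ordered vertex set of $F$. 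Because the local numbering is the restriction of the fixed global numbering $p_1,\dots,p_N$ (consistent numbering, cf.~\cite{Bey_00}), the two neighbours $\omega_1$ and $\omega_2$ induce the \emph{same} order on the shared vertices of $F$; hence the triangulations of $F\times j_k$ induced from each side coincide, giving conformity across lateral interfaces.

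The main obstacle is the combinatorial verification in (iii) that restricting the Kuhn staircase triangulation of $\omega\times j_k$ to a lateral facet reproduces the Kuhn triangulation of that facet under the induced order; this is where consistent numbering is indispensable, since an inconsistent order on $F$ would select incompatible diagonals and create hanging nodes. Once this face-compatibility is in place, steps (i) and (ii) are routine, and the conclusion is precisely~\cite[Thm.~1]{NK_15}.
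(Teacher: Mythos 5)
Your proposal targets a statement for which the paper offers no proof at all: the theorem is imported verbatim from \cite[Thm.~1]{NK_15}, and the paper's only contribution here is the construction of $\tT_h$ via \eqref{int:simp:split}. So there is no internal argument to compare yours against; what you have written is a blind reconstruction of the argument that presumably appears in \cite{NK_15}, and in its essentials it is correct. The decomposition into internal conformity (i), temporal interfaces (ii), and lateral interfaces (iii) is the right skeleton, and your combinatorial core checks out: writing $S_m=\mathrm{conv}(p_m',\dots,p_{d+1}',p_1'',\dots,p_m'')$, a facet $F$ of $\omega$ omitting local index $j$ receives a full $d$-dimensional trace from exactly the $d$ simplices $S_m$ with $m\neq j$, and under the induced ordering $q_1<\dots<q_d$ of the vertices of $F$ these traces are precisely the staircase simplices of the prism over $F$ (the $m$-th for $m<j$, the $(m-1)$-th for $m>j$). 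Since consistent numbering makes the induced order on $F$ identical from both neighbouring prisms, the two facet triangulations coincide, and this is indeed the point where consistency of the numbering is indispensable.

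Two steps are asserted rather than proved and would need to be written out in a complete proof. First, internal conformity of the staircase triangulation requires that \emph{every} pair $S_m$, $S_{m'}$ meets in a common face, not only consecutive pairs; this is standard for the Kuhn triangulation but is part of the claim. Second, your dismissal of prisms sharing only a lower-dimensional face is too quick as stated: admissibility requires agreement of the induced triangulations on \emph{all} shared faces, and the clean fix is to upgrade (iii) to its hereditary form --- the trace of the staircase triangulation on $G\times \overline{j_k}$ for any subsimplex $G$ of $\omega$ is the staircase triangulation of that lower-dimensional prism under the induced order --- which follows by iterating exactly your bookkeeping, after which full conformity follows by induction on the dimension of the shared face. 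Neither issue is a wrong turn; both are closed by the same computation you already performed.
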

%--------------------------------------------------------------------------------------------------------
In~\cite[Thm.~11, Thm.~12]{FuehrerK_21}, we prove the following approximation estimates.
\begin{theorem}\label{thm:heat:interpolation}
  Suppose that $\tT_h$ is a simplicial mesh constructed from a tensor product mesh $J_h\otimes \Omega_h$.
  Then, there exist linear operators
  \begin{align*}
    \Jj_h:L^2(H^1(\Omega))\cap H^1(H^{-1}_0(\Omega))\rightarrow \sS^1(\tT_h),\\
    \Jj_{0,h} :L^2(H^1_0(\Omega))\cap H^1(H^{-1}(\Omega))\rightarrow \sS^1_0(\tT_h),
  \end{align*}
  such that
  \begin{align*}
    \| v-\Jj_{0,h}v \|_{L^2(H^1_0(\Omega))} + \| (v-\Jj_{0,h}v)' \|_{L^2(L^2(\Omega))}
    &\lesssim h \left( \| v \|_{H^1(H^1_0(\Omega))} + \| v \|_{H^2(L^2(\Omega))} + \| v \|_{L^\infty(H^2(\Omega))} \right).\\
    \| \nabla v - \Jj_h\nabla v \|_{L^2(H^1(\Omega))}
    &\lesssim h \left( \| v \|_{H^1(H^2(\Omega))} + \| v \|_{L^\infty(H^3(\Omega))} \right).
  \end{align*}
\end{theorem}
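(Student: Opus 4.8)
The plan is to construct both operators by tensorizing a spatial quasi-interpolation operator with a temporal one and then reducing the resulting prismatic function to the simplicial space $\sS^1(\tT_h)$. Let $\Pi^{\rm sp}$ denote a spatial Scott--Zhang operator onto $\sS^1_0(\Omega_h)$ (respectively onto $\sS^1(\Omega_h)$ for the second estimate, where no boundary condition is present), acting pointwise in time, and let $\Pi^{\rm t}$ be the temporal nodal interpolation onto continuous piecewise affine functions on $J_h$, acting with values in the relevant spatial space. The product $\Pi^{\rm t}\otimes\Pi^{\rm sp}=\Pi^{\rm sp}\Pi^{\rm t}$ maps into the prismatic space $\sS^1(J_h)\otimes\sS^1(\Omega_h)$ on $J_h\otimes\Omega_h$; since the vertices of $J_h\otimes\Omega_h$ coincide with the vertices of the simplicial refinement $\tT_h$, I would then set $\Jj_{0,h}:=\Ii_h\circ(\Pi^{\rm t}\otimes\Pi^{\rm sp})$, where $\Ii_h$ is the nodal interpolant onto $\sS^1(\tT_h)$, and define $\Jj_h$ analogously with the boundary-condition-free spatial operator, applied componentwise to $\nabla v$. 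All three maps are linear and the boundary condition is preserved by $\Pi^{\rm sp}$, so $\Jj_{0,h}$ maps into $\sS^1_0(\tT_h)$. Well-definedness on the full energy space is guaranteed because the trace embedding~\eqref{eq:timetrace} provides the time slices $v(t_k)\in L^2(\Omega)$ that $\Pi^{\rm t}$ needs; the $\|v\|_{L^\infty(H^2(\Omega))}$ term on the right-hand side reflects precisely that temporal nodal interpolation requires pointwise-in-time spatial regularity.

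I would first prove the estimate for the prismatic interpolant alone, postponing the simplicial reduction. Writing $v-(\Pi^{\rm t}\otimes\Pi^{\rm sp})v=(I-\Pi^{\rm sp})v+\Pi^{\rm sp}(I-\Pi^{\rm t})v$ and using Fubini together with the local $H^1$-stability and first-order approximation of $\Pi^{\rm sp}$ in space and of $\Pi^{\rm t}$ in time (and the commutation $\partial_t\Pi^{\rm sp}=\Pi^{\rm sp}\partial_t$, which is an instance of Lemma~\ref{lem:schwarz}), the three right-hand side terms arise one per derivative direction: the pure spatial error $\nabla_x(I-\Pi^{\rm sp})v$ is bounded by $h\,\|v\|_{L^2(H^2)}\le h\,\|v\|_{L^\infty(H^2)}$; the temporal interpolation error of the spatial gradient yields $h\,\|\partial_t\nabla_x v\|_{L^2(Q)}=h\,\|v\|_{H^1(H^1_0)}$; and the temporal interpolation error measured against $\partial_t$ yields $h\,\|\partial_{tt}v\|_{L^2(Q)}=h\,\|v\|_{H^2(L^2)}$. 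This is the step where the separation of space and time in the construction pays off: each derivative direction of the anisotropic energy norm is handled by exactly one factor operator. The second estimate follows in the same manner; here the purely spatial norm $\|\cdot\|_{L^2(H^1(\Omega))}$ causes no trouble, since $\Pi^{\rm t}$ acts only on time and its error is absorbed through $H^1$-stability of $\Pi^{\rm sp}$, leaving the terms $h\,\|v\|_{L^\infty(H^3)}$ and $h\,\|v\|_{H^1(H^2)}$.

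The main obstacle is the final simplicial reduction, i.e.\ bounding $\Ii_h w_h-w_h$ for $w_h:=(\Pi^{\rm t}\otimes\Pi^{\rm sp})v$ in the anisotropic norms. On each prism $w_h$ is a product of affine functions in time and space, hence \emph{not} affine on a space-time simplex of $\tT_h$: it carries a mixed $t\,x$ cross term, and $\Ii_h w_h$ is its affine nodal interpolant. Because the pure second derivatives of $w_h$ vanish, the difference is governed by the single surviving term $\partial_t\nabla_x w_h$, so on each simplex $K$ one has $\|\nabla_{t,x}(\Ii_h w_h-w_h)\|_{L^2(K)}\lesssim h\,\|\partial_t\nabla_x w_h\|_{L^2(K)}$, provided $\tT_h$ is shape-regular of mesh size $h$ --- which holds for the structured meshes under consideration by the cited admissibility construction and the assumption that the time steps are comparable to the spatial diameter $h$. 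It then remains to dominate the discrete mixed derivative by the data: using the $H^1$-stability of $\Pi^{\rm t}$ in time and of $\Pi^{\rm sp}$ in space, $\|\partial_t\nabla_x w_h\|_{L^2(Q)}\lesssim\|\partial_t\nabla_x v\|_{L^2(Q)}=\|v\|_{H^1(H^1_0)}$ for $\Jj_{0,h}$, and correspondingly $\lesssim\|v\|_{H^1(H^2)}$ for $\Jj_h$ applied to $\nabla v$, so the reduction contributes only terms already present on the right-hand side. Summing these local bounds over $\tT_h$ and combining with the prismatic estimate through the triangle inequality yields the two claimed inequalities.
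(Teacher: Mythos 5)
The paper itself contains no proof of this theorem: it is imported verbatim from \cite[Thm.~11, Thm.~12]{FuehrerK_21}, so the only comparison possible is with that reference. Your strategy --- temporal nodal interpolation tensorized with a spatial quasi-interpolant, followed by nodal reduction from the prismatic mesh $J_h\otimes\Omega_h$ to its simplicial subdivision $\tT_h$ --- is precisely the natural (and, to the best of my knowledge, the cited) route, and your bookkeeping is right: one right-hand-side term per derivative direction in the prismatic stage, the $L^\infty$-in-time norm absorbing the purely spatial error, and in the reduction stage the observation that a prismatic bilinear function has vanishing pure second derivatives, so Bramble--Hilbert on the (shape-regular, since time step and spatial diameter are both $h$) Kuhn simplices leaves only the mixed term $\partial_t\nabla_x w_h$, which is then dominated by $\|v\|_{H^1(H^1_0(\Omega))}$ (resp.\ $\|v\|_{H^1(H^2(\Omega))}$) through stability of the two factor operators and the commutation property of Lemma~\ref{lem:schwarz}.

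One technical point does need repair: $\Pi^{\rm sp}$ cannot be the classical Scott--Zhang operator. Your composition applies $\Pi^{\rm sp}$ to the time slices $v(t_k)$, and for $v$ in the full space $L^2(H^1_0(\Omega))\cap H^1(H^{-1}(\Omega))$ the trace embedding~\eqref{eq:timetrace} only provides $v(t_k)\in L^2(\Omega)$; Scott--Zhang is built from face traces and requires (roughly) $W^{1,1}(\Omega)$ regularity, so $\Jj_{0,h}$ as written is not well defined on the whole space claimed in the statement --- exactly the point your well-definedness remark is supposed to settle. The fix is standard and does not disturb any other step: take $\Pi^{\rm sp}$ to be an $L^2(\Omega)$-bounded averaging quasi-interpolant that preserves homogeneous Dirichlet data (Cl\'ement or Ern--Guermond type), or, since $\Omega_h$ is quasi-uniform, the $L^2(\Omega)$-orthogonal projection onto $\sS^1_0(\Omega_h)$, which is then $H^1$-stable as well. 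With such a choice all the stability and first-order approximation properties you invoke hold, and the rest of your argument goes through.
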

For recent and improved results on tensor meshes we refer to~\cite[Sec.~4]{DieningST_21}.
%--------------------------------------------------------------------------------------------------------
\begin{theorem}\label{thm:copt:apriori}
  Let $(f,u,\qq)\in V$ and $(y,\xxi)\in U$ be the solution of Problem~\eqref{eq:copt:var}, and $p$ be the adjoint
  state. Suppose that $\tT_h$ is a simplicial mesh constructed from a tensor product mesh $J_h\otimes \Omega_h$.
  For the discrete spaces
  $U_h,V_h$ defined in~\eqref{eq:discretespaces}, let $(f_h,u_h,\qq_h)\in V_h$ and $(y_h,\xxi_h)\in U_h$ be the solution
  of Problem~\eqref{eq:copt:disc}.
  Suppose that $f\in H^1(Q)$ and
  \begin{align*}
    u,p &\in L^2(H^4(\Omega))\cap H^1(H^2(\Omega))\cap H^2(L^2(\Omega)).
%    u,p\in H^1(H^1_0(\Omega)\cap H^2(\Omega)) \cap H^2(L^2(\Omega)) \cap L^\infty(H^3(\Omega)).
  \end{align*}
  Then
  \begin{align*}
    \| (f,u,\qq) - (f_h,u_h,\qq_h) \|_V + \| (y,\xxi) - (y_h,\xxi_h) \|_U
    = \OO(h).
  \end{align*}
\end{theorem}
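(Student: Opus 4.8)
The plan is to read the $\OO(h)$ rate directly off the quasi-optimality estimate of Theorem~\ref{thm:copt:disc:wp} by exhibiting one concrete pair of discrete interpolants whose distance to the exact solution is $\OO(h)$. Since Theorem~\ref{thm:copt:disc:wp} gives
\[
  \| (f,u,\qq) - (f_h,u_h,\qq_h) \|_V + \| (y,\xxi) - (y_h,\xxi_h) \|_U
  \lesssim \inf_{\substack{(\wilde f_h,\wilde u_h,\wilde\qq_h)\in V_h\\(\wilde y_h,\wilde\xxi_h)\in U_h}}
  \big( \| (f,u,\qq) - (\wilde f_h,\wilde u_h,\wilde\qq_h) \|_V + \| (y,\xxi) - (\wilde y_h,\wilde\xxi_h) \|_U \big),
\]
it suffices to bound the infimum for a single admissible choice. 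Recall from Lemma~\ref{lem:lagmult} that $\qq=\nabla u$ and $\xxi=\nabla y+\nabla p$, so the continuous solution is determined by the scalar fields $u$, $y$ and the adjoint state $p$; by hypothesis (and, for $y$, by Lemma~\ref{lem:copt:regularity}) all three lie in $L^2(H^4(\Omega))\cap H^1(H^2(\Omega))\cap H^2(L^2(\Omega))$.

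First I would fix the interpolants. For the control, let $\wilde f_h\in\pP^0(\tT_h)$ be the $L^2(Q)$-orthogonal projection of $f$; since the simplices of $\tT_h$ have diameter $\OO(h)$ and $f\in H^1(Q)$, an elementwise Poincar\'e argument yields $\|f-\wilde f_h\|_{L^2(Q)}\lesssim h\,\|f\|_{H^1(Q)}$. For the state I set $\wilde u_h:=\Jj_{0,h}u$ and $\wilde\qq_h:=\Jj_h\nabla u$, and for the multiplier $\wilde y_h:=\Jj_{0,h}y$ and $\wilde\xxi_h:=\Jj_h\nabla y+\Jj_h\nabla p$. These belong to $U_h=\sS^1_0(\tT_h)\times[\sS^1(\tT_h)]^d$ by the mapping properties in Theorem~\ref{thm:heat:interpolation}, and $u,y\in L^2(H^1_0(\Omega))\cap H^1(H^{-1}(\Omega))$ as components of elements of $U$ so that $\Jj_{0,h}$ is defined on them.

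Next I would estimate term by term. The natural norm on $U$ controls the $L^2(H^1_0(\Omega))$-norm of the scalar part, the $L^2(Q)$-norm of the vector part, and the $L^2(Q)$-norm of $\partial_t(\cdot)-\div(\cdot)$. For $(u,\nabla u)-(\wilde u_h,\wilde\qq_h)$ the first is bounded by the first estimate of Theorem~\ref{thm:heat:interpolation}, the second by the second estimate (which controls the stronger $L^2(H^1(\Omega))$-norm), and the third by the triangle inequality $\|\partial_t(u-\Jj_{0,h}u)\|_{L^2(Q)}+\|\div(\nabla u-\Jj_h\nabla u)\|_{L^2(Q)}$, whose first summand is the second term of the first estimate and whose second summand is again dominated by $\|\nabla u-\Jj_h\nabla u\|_{L^2(H^1(\Omega))}$. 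The multiplier pair is handled identically, splitting $\xxi-\wilde\xxi_h=(\nabla y-\Jj_h\nabla y)+(\nabla p-\Jj_h\nabla p)$ and invoking the estimates for $y$ and $p$ separately. Every contribution is $\OO(h)$, which together with the control bound gives the claim.

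The delicate step, and the one I expect to be the main obstacle, is the regularity bookkeeping: the right-hand sides of Theorem~\ref{thm:heat:interpolation} involve not only $H^1(H^1_0(\Omega))$, $H^2(L^2(\Omega))$, and $H^1(H^2(\Omega))$, but also the $L^\infty$-in-time norms $L^\infty(H^2(\Omega))$ and $L^\infty(H^3(\Omega))$, which are not literally among the hypotheses. I would recover these from the space-time embedding~\eqref{eq:timetrace:2}: with $m=1$ it gives $L^2(H^3(\Omega))\cap H^1(H^1(\Omega))\hookrightarrow C(\overline J;H^2(\Omega))$, and with $m=2$ it gives $L^2(H^4(\Omega))\cap H^1(H^2(\Omega))\hookrightarrow C(\overline J;H^3(\Omega))$; the assumed regularity of $u$, $p$ (and $y$) lies in both intersections, and~\eqref{eq:timetrace:2} is available on Lipschitz domains via Stein's extension. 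Finally, membership in $H^1(H^1_0(\Omega))$ rather than merely $H^1(H^1(\Omega))$ follows from Corollary~\ref{cor:schwarz}. Once this matching of the sharp interpolation norms to the given Sobolev--Bochner regularity is in place, summing the elementwise estimates yields the $\OO(h)$ rate.
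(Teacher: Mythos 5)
Your proposal is correct and follows essentially the same route as the paper: invoke the quasi-optimality of Theorem~\ref{thm:copt:disc:wp}, choose the interpolants $(\Pi_h^0 f,\Jj_{0,h}u,\Jj_h\nabla u)$ and $(\Jj_{0,h}y,\Jj_h\nabla(y+p))$ using $\qq=\nabla u$, $\xxi=\nabla(y+p)$ from Lemma~\ref{lem:lagmult}, estimate term by term with Theorem~\ref{thm:heat:interpolation}, and supply the $L^\infty$-in-time and $H^1(H^1_0(\Omega))$ regularity via the embedding~\eqref{eq:timetrace:2} and Corollary~\ref{cor:schwarz}, with $y$'s regularity taken from Lemma~\ref{lem:copt:regularity}. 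The only cosmetic differences are that you make the appeal to quasi-optimality explicit (the paper leaves it implicit) and you also invoke~\eqref{eq:timetrace:2} with $m=1$ for the $L^\infty(H^2(\Omega))$ norm, which is redundant since the $m=2$ case already dominates it.
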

\begin{proof}
  According to Corollary~\ref{cor:schwarz} and the embedding~\eqref{eq:timetrace:2} with $m=2$, we see
  \begin{align*}
    u,p\in H^1(H^1_0(\Omega)\cap H^2(\Omega)) \cap H^2(L^2(\Omega)) \cap L^\infty(H^3(\Omega)).
  \end{align*}
  According to Lemma~\ref{lem:lagmult}, $\qq=\nabla u$ and $\xxi = \nabla(y+p)$.
  Denote by $\Pi^0_h:L^2(Q)\rightarrow \pP^0(\tT_h)$ the $L^2(Q)$-orthogonal projection.
  Theorem~\ref{thm:heat:interpolation} and standard approximation estimates for $\Pi^0_h$ show
  \begin{align*}
    \| (f,u,\qq)-(\Pi_hf,\Jj_{0,h}u,\Jj_h\nabla u) \|_V
    &\lesssim \| f - \Pi_h f\|_{L^2(Q)} + \| (u,\nabla u)-(\Jj_{0,h}u,\Jj_h\nabla u) \|_U\\
    &\lesssim \| f - \Pi_h f\|_{L^2(Q)} + \| u-\Jj_{0,h}u \|_{L^2(H^1_0(\Omega))} + \| (u-\Jj_{0,h}u)' \|_{L^2(L^2(\Omega))}\\
    &\qquad + \| \nabla u - \Jj_h\nabla u \|_{L^2(L^2(\Omega))} + \| \div(\nabla u - \Jj_h\nabla u) \|_{L^2(L^2(\Omega))}\\
    &\lesssim h \left( \| f \|_{H^1(Q)} +
    \| u \|_{H^1(H^1_0(\Omega)\cap H^2(\Omega))} + \| u \|_{H^2(L^2(\Omega))} + \| u \|_{L^\infty(H^3(\Omega))} \right).
  \end{align*}
  Next, Lemma~\ref{lem:copt:regularity} and Corollary~\ref{cor:schwarz} imply also that
  \begin{align*}
    y\in H^1(H^1_0(\Omega)\cap H^2(\Omega)) \cap H^2(L^2(\Omega)) \cap L^\infty(H^3(\Omega)),
  \end{align*}
  and hence
  \begin{align*}
    \| (y,\xxi) - (\Jj_{0,h}y,\Jj_h\nabla(y+p)) \|_U &\lesssim \| (y,\nabla y) - (\Jj_{0,h}y,\Jj_h\nabla y) \|_U\\
    &\qquad+ \| \nabla p - \Jj_h\nabla p \|_{L^2(L^2(\Omega))} + \| \div(\nabla p - \Jj_h\nabla p) \|_{L^2(L^2(\Omega))}\\
    &\lesssim
    h\Bigl( 
      \| y \|_{H^1(H^1_0(\Omega)\cap H^2(\Omega))} + \| y \|_{H^2(L^2(\Omega))} + \| y \|_{L^\infty(H^3(\Omega))}\\
      &\qquad\qquad + \| p \|_{H^1(H^2(\Omega))} + \| p \|_{L^\infty(H^3(\Omega))}
      \Bigr)
  \end{align*}
\end{proof}
Combining Lemma~\ref{lem:copt:regularity} and Theorem~\ref{thm:copt:apriori}, we immediately obtain
\begin{cor}\label{cor:copt:apriori}
  Let $\Omega\subset\R^d$ with $\partial\Omega$ be smooth, or, particularly, $\Omega\subset\R$ an interval.
  Suppose that $u_0\in H^1_0(\Omega)\cap H^3(\Omega)$ with $\Delta u_0\in H^1_0(\Omega)$, and
  $u_d\in L^2(H^2(\Omega))\cap H^1(L^2(\Omega))$ with $u_d(T)\in H^1_0(\Omega)$,
  and $u_{T,d}\in H^1_0(\Omega)$ with $\Delta u_{T,d}\in H^1_0(\Omega)$.
  Let $(f,u,\qq)\in V$ and $(y,\xxi)\in U$ be the solution of Problem~\eqref{eq:copt:var}
  For the discrete spaces $U_h,V_h$ defined in~\eqref{eq:discretespaces}, let $(f_h,u_h,\qq_h)\in V_h$ and $(y_h,\xxi_h)\in U_h$ be the solution
  of Problem~\eqref{eq:copt:disc}.
  Then
  \begin{align*}
    \| (f,u,\qq) - (f_h,u_h,\qq_h) \|_V + \| (y,\xxi) - (y_h,\xxi_h) \|_U
    = \OO(h).
  \end{align*}
\end{cor}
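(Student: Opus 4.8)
The plan is to read off the hypotheses of Theorem~\ref{thm:copt:apriori} directly from the conclusion of Lemma~\ref{lem:copt:regularity}; the corollary is then immediate and requires no new estimate.

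First I would observe that the assumptions imposed on the data $u_0$, $u_d$ and $u_{T,d}$ in the corollary are verbatim those of Lemma~\ref{lem:copt:regularity}. Hence that lemma applies to the solution $(f,u,\qq)$, $(y,\xxi)$ of~\eqref{eq:copt:var} and to the adjoint state $p$ from Theorem~\ref{thm:copt}, yielding
\begin{align*}
  f,u,p,y \in L^2(H^4(\Omega))\cap H^1(H^2(\Omega))\cap H^2(L^2(\Omega)).
\end{align*}

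Next I would check that this regularity is exactly what Theorem~\ref{thm:copt:apriori} demands. The theorem requires $u,p \in L^2(H^4(\Omega))\cap H^1(H^2(\Omega))\cap H^2(L^2(\Omega))$, which is already contained in the display above, together with the extra hypothesis $f\in H^1(Q)$. The latter is the only item not literally part of the lemma's conclusion, so I would spend a single line verifying it: from $f\in L^2(H^4(\Omega))\cap H^2(L^2(\Omega))$ we have in particular $f\in L^2(H^1(\Omega))\cap H^1(L^2(\Omega))$, and the standard identification $L^2(H^1(\Omega))\cap H^1(L^2(\Omega))=H^1(Q)$ then gives $f\in H^1(Q)$. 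With every hypothesis verified, an application of Theorem~\ref{thm:copt:apriori} produces
\begin{align*}
  \| (f,u,\qq) - (f_h,u_h,\qq_h) \|_V + \| (y,\xxi) - (y_h,\xxi_h) \|_U = \OO(h),
\end{align*}
which is precisely the assertion of the corollary.

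There is essentially no obstacle here: the corollary is a bookkeeping combination of two results established above, so the entire argument amounts to matching hypotheses with conclusions. The only point that warrants a moment's attention—and which I would not omit—is the passage from the Bochner-type regularity $L^2(H^1(\Omega))\cap H^1(L^2(\Omega))$ to the flat Sobolev space $H^1(Q)$ on the space-time cylinder, needed to trigger the $f\in H^1(Q)$ hypothesis of the a-priori theorem; everything else follows at once.
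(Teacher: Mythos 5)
Your proposal is correct and takes essentially the same approach as the paper, which obtains the corollary precisely by combining Lemma~\ref{lem:copt:regularity} with Theorem~\ref{thm:copt:apriori} ("we immediately obtain"). Your one-line verification that $f\in L^2(H^4(\Omega))\cap H^2(L^2(\Omega))$ implies $f\in L^2(H^1(\Omega))\cap H^1(L^2(\Omega))=H^1(Q)$ is a valid detail that the paper leaves implicit.
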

%--------------------------------------------------------------------------------------------------------
\section{Numerical Experiments}
%--------------------------------------------------------------------------------------------------------
In this section we present several numerical experiments for the case $d=1$.
In all experiments, we use $\Omega = (0,1)$ and $J = (0,T)$ and $\lambda=1$.
For the discrete spaces $V_h,U_h$ in~\eqref{eq:discretespaces} and
$(f_h,u_h,\qq_h,y_h,\xxi_h) \in V_h \times U_h$ being the discrete aproximation~\eqref{eq:copt:disc}
of~\eqref{eq:copt:var},
we define the error estimator
\begin{align*}
  \wilde\eta_h^2 := \wilde\eta^2(f_h,u_h,\qq_h,y_h,\xxi_h)
  = \sum_{K\in\tT_h} \wilde\eta_K^2(f_h,u_h,\qq_h,y_h,\xxi_h)^2
\end{align*}
where the local error indicators are given by
\begin{align*}
  &\wilde\eta_K(f_h,u_h,\qq_h,y_h,\xxi_h)^2 := \\
  &\qquad\| \lambda^{-1}(\partial_ty_h-\div\,\xxi_h) - f_h \|_Q^2 + \| \qq_h-\nabla u_h \|_{Q}^2
  + \| \partial_t u_h - \div\,\qq_h-f_h \|_Q^2 + \| u_h(0) - u_0 \|_{\partial K\cap\left\{ 0 \right\}\times\Omega }^2\\
    &\qquad+ \| \xxi_h-\nabla y_h - {\boldsymbol\chi}_h \|_{Q}^2 + \| \partial_t y_h - \div\,\xxi_h+ p_h \|_Q^2 \\
    &\qquad+ \|\alpha(u_h-u_d) + \partial_t  p_h + \div{\boldsymbol\chi}_h\|_Q^2
    + \|\nabla  p_h-{\boldsymbol\chi}_h\|_Q^2 + \| p_h(T) -\beta(u_h(T)-u_d)\|_{\partial K\cap\left\{ T \right\}\times\Omega }^2,
\end{align*}
with $(p_h,\xxi_h)\in U_h$ being the least-squares approximation of~\eqref{eq:adjointTilde}. In our implementation we
replace $u_d$ by $\Pi_h^0 u_d$, where $\Pi_h^0:L^2(Q)\rightarrow \pP^0(\tT_h)$ is the $L^2(Q)$-orthogonal projection, as well
as $u_0$ by $\Pi_{0,h} u_0$ and $u_{T,d}$ by $\Pi_{T,h}^1 u_{T,d}$, where $\Pi_{t,h}^1:L^2(\Omega) \rightarrow \sS^1(\tT_h\cap t \times \Omega)$ is
the $L^2(\Omega)$-orthogonal projection on the spatial mesh given by the restriction of $\tT_h$ to time $t$. Therefore, we also include data oscillation terms
\begin{align*}
  \osc_h^2 := \sum_{K\in\tT_h} \osc_K^2,
\end{align*}
where the local oscillation terms are given by
\begin{align*}
  \osc_K^2 := \| u_d - \Pi_h^0 u_d \|_{K}^2 + \| u_{T,d} - \Pi_{T,h}^1 u_{T,d} \|_{\partial K\cap\left\{ T \right\}\times\Omega }^2
  + \| u_0 - \Pi_{0,h}^1 u_0 \|_{\partial K\cap\left\{ 0 \right\}\times\Omega }^2.
\end{align*}
Finally, we use
\begin{align*}
  \ind_h^2 := \wilde\eta_h^2 + \osc_h^2 = \sum_{K\in\tT_h} \wilde\eta_K^2 + \osc_K^2
\end{align*}
as refinement indicators to steer the adaptive mesh-refinement. We use the D\"orfler criterion to mark elements for refinement, i.e.,
find a minimal set $\mM\subset\tT_h$ such that
\begin{align*}
  \theta\; \ind_h^2 \leq \sum_{K\in\tT_h} \ind_K^2.
\end{align*}
Throughout, we use $\theta=0.5$. Starting from an initial mesh, we use Newest Vertex Bisection to generate uniform and adaptively refined meshes.
In all figures we visualize convergence rates using triangles where the (negative) slope is indicated
by a number besides the triangle. We plot quantities of interest with respect to the overall number of degrees of freedom, which is (counting also
Dirichlet degrees of freedom)
\begin{align*}
  \text{degrees of freedom} = (\text{number of elements}) + 4(\text{number of nodes}).
\end{align*}
Note that for uniform refinement we have
\begin{align*}
  \text{degrees of freedom}^{-1} \simeq h^2.
\end{align*}
All computations have been realized on an Intel i7-10750H 2.60GHz Linux machine with 16GB RAM,
implemented in MATLAB using sparse matrices and the MATLAB backslash operator to solve linear systems.
%--------------------------------------------------------------------------------------------------------
\subsection{Experiment 1}\label{ex1}
We use the smooth exact solution
\begin{align*}
  u(x,t) &= \sin(\pi x)\cos(\pi t),\\
  f(x,t) &= -\pi \sin(\pi x) \sin(\pi t) + \pi^2\sin(\pi x)\cos(\pi t),
\end{align*}
the parameters $\alpha=\beta=1/2$,
and calculate the data $u_d$ and $u_{T,d}$ using Theorem~\ref{thm:copt}.
The convergence histories for $\| u - u_h \|_{L^2(Q)}^2$ and $\| f - f_h \|_{L^2(Q)}^2$ in the
case of uniform and adaptive mesh refinement are shown in Figures~\ref{fig:Example1Unif}
and~\ref{fig:Example1Adap}, together with the error estimator $\widetilde\eta_h$ and the data
oscillation term $\osc_h$.
According to Corollary~\ref{cor:copt:apriori}, we expect in the uniform case the optimal rate
\begin{align*}
  \| u - u_h \|_{L^2(Q)}^2 + \| f - f_h \|_{L^2(Q)}^2 \leq
  \| (f,u,\qq) - (f_h,u_h,\qq_h) \|_V^2 + \| (y,\xxi) - (y_h,\xxi_h) \|_U^2 = \OO(h^2),
\end{align*}
which we observe in Figure~\ref{fig:Example1Unif}. We even observe the optimal convergence rate for
the $L^2$-error of $u$, $\| u - u_h \|_{L^2(Q)}^2 = \OO(h^4)$. The convergence history in the
case of adaptive mesh refinement in Figure~\ref{fig:Example1Adap} shows that
the adaptive algorithm reproduces optimal rates. In Figure~\ref{fig:Example1AdapMesh}
we plot intermediate adaptive meshes with $160$ respectively $4321$ number of elements.
%--------------------------------------------------------------------------------------------------------
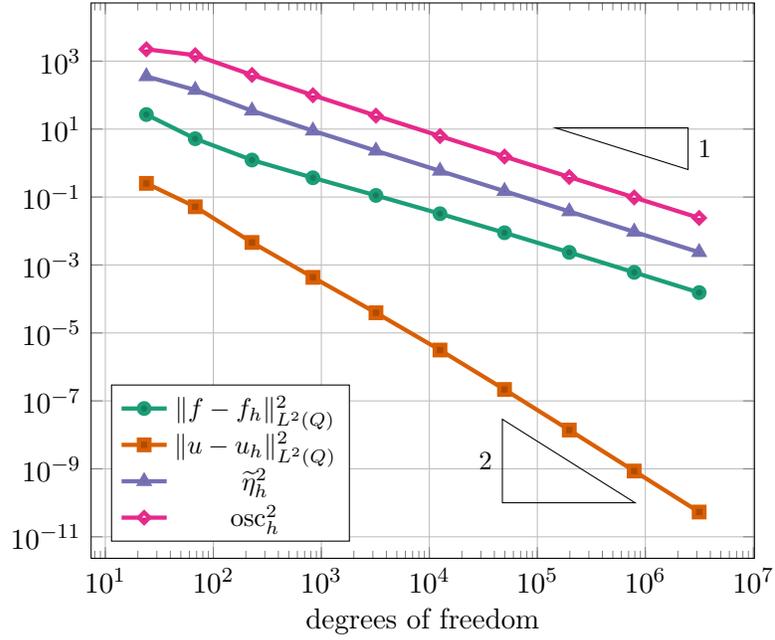
\begin{figure}
  \begin{center}
      \begin{tikzpicture}
\begin{loglogaxis}[
width=0.65\textwidth,
%cycle list name=black white,
%cycle list name=exotic,
cycle list/Dark2-6,
% combine it with ’mark list*’:
cycle multiindex* list={
mark list*\nextlist
Dark2-6\nextlist},
every axis plot/.append style={ultra thick},
xlabel={degrees of freedom},
grid=major,
legend entries={\small $\| f - f_h \|_{L^2(Q)}^2$,\small $\| u - u_h \|_{L^2(Q)}^2$,
\small $\widetilde \eta_h^2$, \small $\osc_h^2$},
legend pos=south west,
]
\addplot table [x=ndofs,y=errsqfL2] {figures/Example1Unif.dat};
\addplot table [x=ndofs,y=errsquL2] {figures/Example1Unif.dat};
\addplot table [x=ndofs,y=est] {figures/Example1Unif.dat};
\addplot table [x=ndofs,y=osc] {figures/Example1Unif.dat};

\logLogSlopeTriangle{0.9}{0.2}{0.7}{1}{black}{{\small $1$}};
\logLogSlopeTriangleBelow{0.82}{0.2}{0.1}{2}{black}{{\small $2$}};
\end{loglogaxis}
\end{tikzpicture}
  \end{center}
  \caption{Uniform mesh-refinement for Experiment 1.}
  \label{fig:Example1Unif}
\end{figure}
%--------------------------------------------------------------------------------------------------------
\begin{figure}
  \begin{center}
      \begin{tikzpicture}
\begin{loglogaxis}[
width=0.65\textwidth,
%cycle list name=black white,
%cycle list name=exotic,
cycle list/Dark2-6,
% combine it with ’mark list*’:
cycle multiindex* list={
mark list*\nextlist
Dark2-6\nextlist},
every axis plot/.append style={ultra thick},
xlabel={degrees of freedom},
grid=major,
legend entries={\small $\| f - f_h \|_{L^2(Q)}^2$,\small $\| u - u_h \|_{L^2(Q)}^2$,
\small $\widetilde \eta_h^2$, \small $\osc_h^2$},
legend pos=south west,
]
\addplot table [x=ndofs,y=errsqfL2] {figures/Example1Adap.dat};
\addplot table [x=ndofs,y=errsquL2] {figures/Example1Adap.dat};
\addplot table [x=ndofs,y=est] {figures/Example1Adap.dat};
\addplot table [x=ndofs,y=osc] {figures/Example1Adap.dat};

\logLogSlopeTriangle{0.9}{0.2}{0.7}{1}{black}{{\small $1$}};
\logLogSlopeTriangleBelow{0.82}{0.2}{0.1}{2}{black}{{\small $2$}};
\end{loglogaxis}
\end{tikzpicture}
  \end{center}
  \caption{Adaptive mesh-refinement for Experiment 1.}
  \label{fig:Example1Adap}
\end{figure}
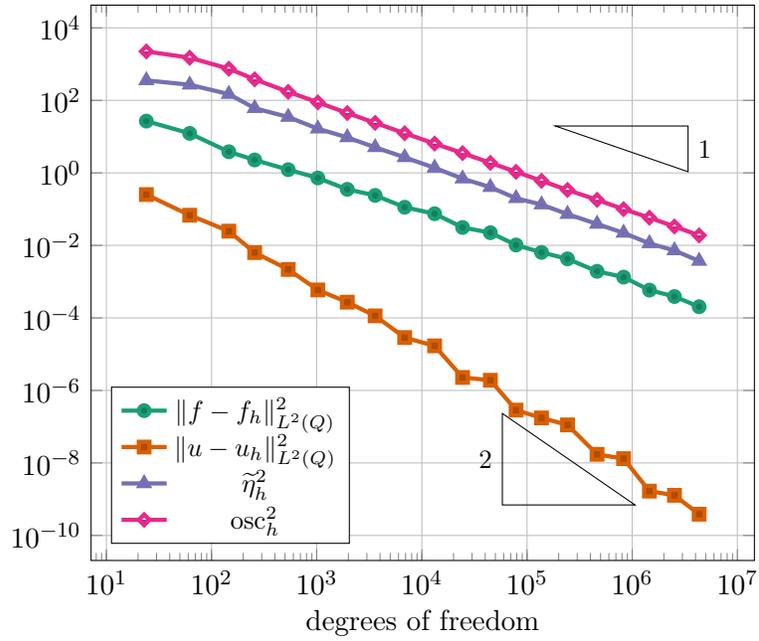
%--------------------------------------------------------------------------------------------------------
\begin{figure}
  \begin{center}
      \begin{tikzpicture}
\begin{axis}[%hide axis,
width=0.49\textwidth,
    axis equal,
%    width=0.49\textwidth,
%    xlabel={$x$},
%    ylabel={$y$},
]%, axis equal image = true]

\addplot[patch,color=white,%mesh, 
faceted color = black, line width = 0.5pt,
patch table ={figures/Example1_el1.dat}] file{figures/Example1_co1.dat};
%\addplot[mark=*,color=gray,only marks] table[x index=0,y index=1] {figures/Example1_co1.dat};
\end{axis}
\end{tikzpicture}
      \begin{tikzpicture}
\begin{axis}[%hide axis,
width=0.49\textwidth,
    axis equal,
%    width=0.49\textwidth,
%    xlabel={$x$},
%    ylabel={$y$},
]%, axis equal image = true]

\addplot[patch,color=white,%mesh, 
faceted color = black, line width = 0.5pt,
patch table ={figures/Example1_el2.dat}] file{figures/Example1_co2.dat};
%\addplot[mark=*,color=gray,only marks] table[x index=0,y index=1] {figures/Example1_co2.dat};
\end{axis}
\end{tikzpicture}
  \end{center}
  \caption{Adaptive meshes for Experiment 1 with $160$ resp. $4321$ elements.}
  \label{fig:Example1AdapMesh}
\end{figure}
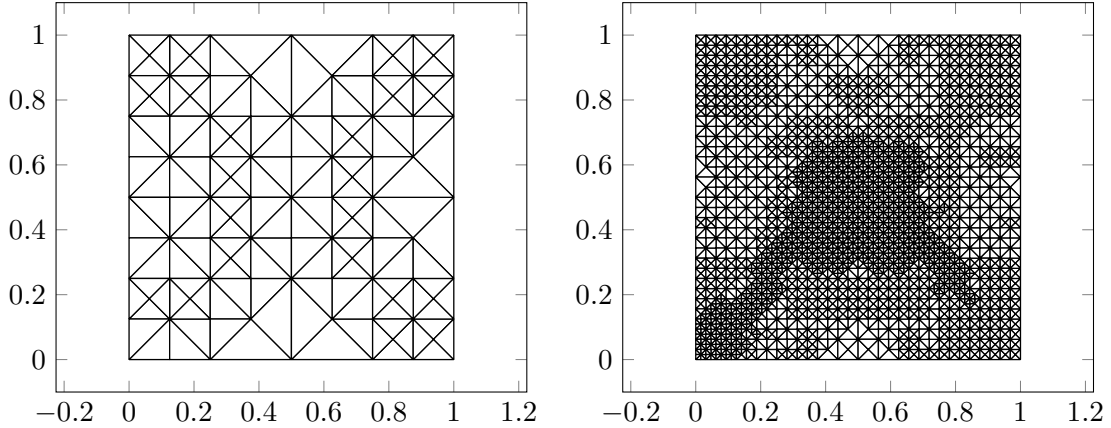
%--------------------------------------------------------------------------------------------------------
\subsection{Experiment 2}
We use $u_0=0$, $\alpha=1,\beta=0$ and the discontinuous desired state
\begin{align*}
  u_d(t,x) = 
  \begin{cases}
    1 & 0.5\leq t \text{ and } 0.2\leq x \leq 0.8,\\
    0 & \text{ else}.
  \end{cases}
\end{align*}
Note that $u_d$ does not fulfill the conditions of Corollary~\ref{cor:copt:apriori}, and consequently we observe
a reduced convergence rate for $\wilde\eta_h$ and $\osc_h$ in the case of uniform mesh-refinement, cf. Figure~\ref{fig:Example2},
from which we conclude
\begin{align*}
  \| (f,u,\qq) - (f_h,u_h,\qq_h) \|_V^2 + \| (y,\xxi) - (y_h,\xxi_h) \|_U^2 \simeq \eta_h^2 \simeq \OO(h).
\end{align*}
The adaptive algorithm, on the other hand, recovers the optimal convergence rate, cf. Figure~\ref{fig:Example2}.
In Figure~\ref{fig:Example2AdapMesh} we plot two intermediate adaptive meshes with $379$ respectively $4730$ elements, 
and observe that the adaptive algorithm refines the mesh in the vicinity of the discontinuity of the desired state $u_d$.
We also observe that the spatial discontinuity seems to require a finer resolution than the temporal discontinuity.

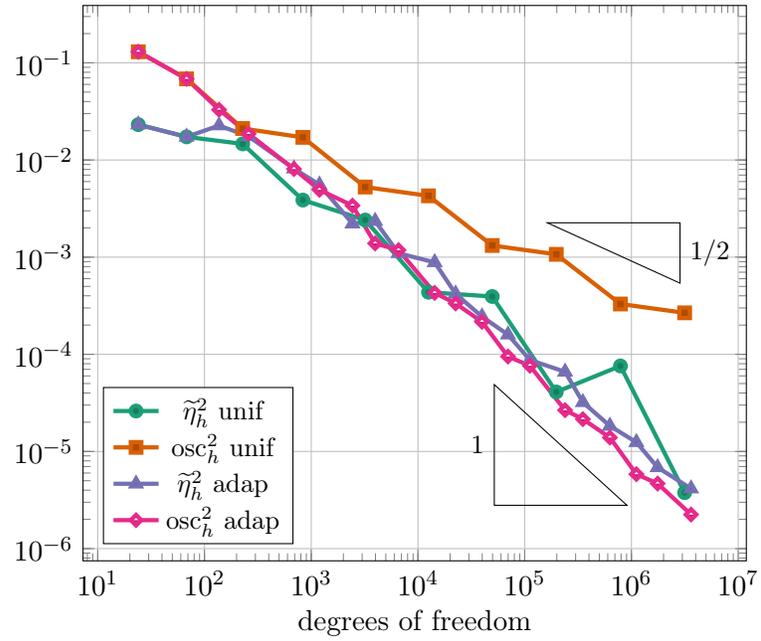
\begin{figure}
  \begin{center}
      \begin{tikzpicture}
\begin{loglogaxis}[
width=0.65\textwidth,
%cycle list name=black white,
%cycle list name=exotic,
cycle list/Dark2-6,
% combine it with ’mark list*’:
cycle multiindex* list={
mark list*\nextlist
Dark2-6\nextlist},
every axis plot/.append style={ultra thick},
xlabel={degrees of freedom},
grid=major,
legend entries={\small $\widetilde \eta_h^2$ unif, \small $\osc_h^2$ unif,
\small $\widetilde \eta_h^2$ adap, \small $\osc_h^2$ adap},
legend pos=south west,
]
%\addplot table [x=ndofs,y=errsqfL2] {figures/Example2Unif.dat};
%\addplot table [x=ndofs,y=errsquL2] {figures/Example2Unif.dat};
\addplot table [x=ndofs,y=est] {figures/Example2Unif.dat};
\addplot table [x=ndofs,y=osc] {figures/Example2Unif.dat};
\addplot table [x=ndofs,y=est] {figures/Example2Adap.dat};
\addplot table [x=ndofs,y=osc] {figures/Example2Adap.dat};

\logLogSlopeTriangle{0.9}{0.2}{0.5}{0.5}{black}{{\small $1/2$}};
\logLogSlopeTriangleBelow{0.82}{0.2}{0.1}{1}{black}{{\small $1$}};
\end{loglogaxis}
\end{tikzpicture}
  \end{center}
  \caption{Uniform and adaptive mesh-refinement for Experiment 2.}
  \label{fig:Example2}
\end{figure}
%--------------------------------------------------------------------------------------------------------
%\begin{figure}
%  \begin{center}
%      \input{figures/Example2Unif}
%  \end{center}
%  \caption{Uniform mesh-refinement for Experiment 2.}
%  \label{fig:Example2Unif}
%\end{figure}
%--------------------------------------------------------------------------------------------------------
%\begin{figure}
%  \begin{center}
%      \input{figures/Example2Adap}
%  \end{center}
%  \caption{Adaptive mesh-refinement for Experiment 2.}
%  \label{fig:Example2Adap}
%\end{figure}
%--------------------------------------------------------------------------------------------------------
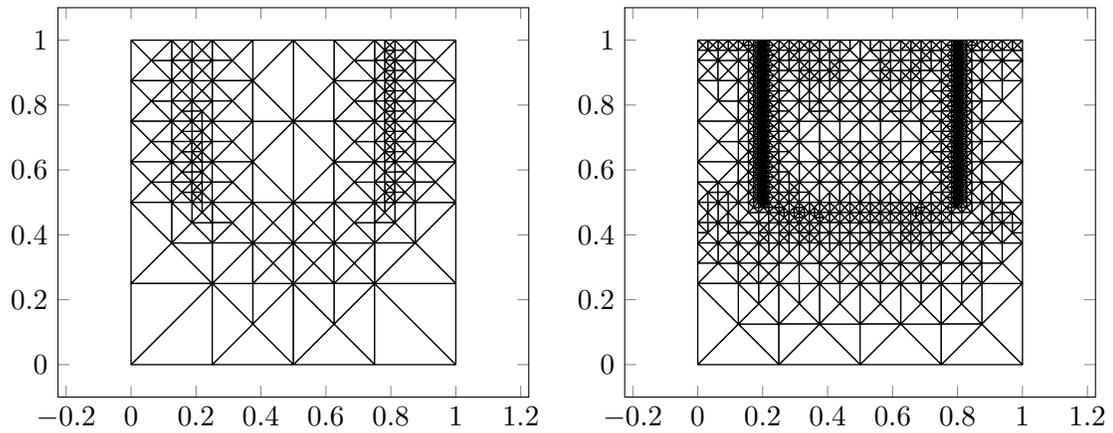
\begin{figure}
  \begin{center}
      \begin{tikzpicture}
\begin{axis}[%hide axis,
width=0.49\textwidth,
    axis equal,
%    width=0.49\textwidth,
%    xlabel={$x$},
%    ylabel={$y$},
]%, axis equal image = true]

\addplot[patch,color=white,%mesh, 
faceted color = black, line width = 0.5pt,
patch table ={figures/Example2_el1.dat}] file{figures/Example2_co1.dat};
%\addplot[mark=*,color=gray,only marks] table[x index=0,y index=1] {figures/Example1_co1.dat};
\end{axis}
\end{tikzpicture}
      \begin{tikzpicture}
\begin{axis}[%hide axis,
width=0.49\textwidth,
    axis equal,
%    width=0.49\textwidth,
%    xlabel={$x$},
%    ylabel={$y$},
]%, axis equal image = true]

\addplot[patch,color=white,%mesh, 
faceted color = black, line width = 0.5pt,
patch table ={figures/Example2_el2.dat}] file{figures/Example2_co2.dat};
%\addplot[mark=*,color=gray,only marks] table[x index=0,y index=1] {figures/Example1_co2.dat};
\end{axis}
\end{tikzpicture}
  \end{center}
  \caption{Adaptive mehses for Experiment 2 with $379$ resp. $4730$ elements.}
  \label{fig:Example2AdapMesh}
\end{figure}

%--------------------------------------------------------------------------------------------------------
\subsection{Experiment 3}
We use $u_0=0$, $\alpha=0, \beta=1$ and the smooth desired state
\begin{align*}
  u_{T,d}(x) = \sin(\pi x).
\end{align*}
According to Corollary~\ref{cor:copt:apriori}, we expect in the uniform case the optimal rates
\begin{align*}
  \eta_h^2 \simeq \| (f,u,\qq) - (f_h,u_h,\qq_h) \|_V^2 + \| (y,\xxi) - (y_h,\xxi_h) \|_U^2 &= \OO(h^2),\\
  \osc_h^2 = \| u_{T,d} - \Pi_{T,h}^1 u_{T,d} \|_{L^2(\Omega)}^2 &\simeq \OO(h^4),
\end{align*}
which we observe in Figure~\ref{fig:Example3}.
The convergence history in the case of adaptive mesh refinement in Figure~\ref{fig:Example3} shows that
the adaptive algorithm reproduces the optimal rates. In Figure~\ref{fig:Example3AdapMesh}
we plot intermediate adaptive meshes with $142$ respectively $3366$ number of elements.

\begin{figure}
  \begin{center}
      \begin{tikzpicture}
\begin{loglogaxis}[
width=0.65\textwidth,
%cycle list name=black white,
%cycle list name=exotic,
cycle list/Dark2-6,
% combine it with ’mark list*’:
cycle multiindex* list={
mark list*\nextlist
Dark2-6\nextlist},
every axis plot/.append style={ultra thick},
xlabel={degrees of freedom},
grid=major,
legend entries={\small $\widetilde \eta_h^2$ unif, \small $\osc_h^2$ unif,
\small $\widetilde \eta_h^2$ adap, \small $\osc_h^2$ adap},
legend pos=south west,
]
%\addplot table [x=ndofs,y=errsqfL2] {figures/Example2Unif.dat};
%\addplot table [x=ndofs,y=errsquL2] {figures/Example2Unif.dat};
\addplot table [x=ndofs,y=est] {figures/Example3Unif.dat};
\addplot table [x=ndofs,y=osc] {figures/Example3Unif.dat};
\addplot table [x=ndofs,y=est] {figures/Example3Adap.dat};
\addplot table [x=ndofs,y=osc] {figures/Example3Adap.dat};

\logLogSlopeTriangle{0.9}{0.2}{0.7}{1}{black}{{\small $1$}};
\logLogSlopeTriangleBelow{0.82}{0.2}{0.1}{2}{black}{{\small $2$}};
\end{loglogaxis}
\end{tikzpicture}
  \end{center}
  \caption{Uniform and adaptive mesh-refinement for Experiment 3.}
  \label{fig:Example3}
\end{figure}
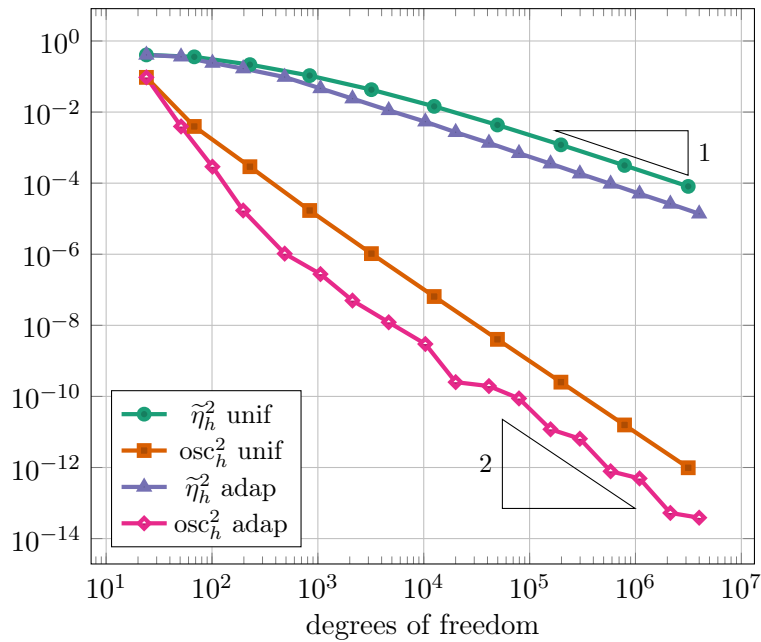
%--------------------------------------------------------------------------------------------------------
%\begin{figure}
%  \begin{center}
%      \input{figures/Example3Unif}
%  \end{center}
%  \caption{Example from with .}
%  \label{fig:Example3Unif}
%\end{figure}
%--------------------------------------------------------------------------------------------------------
%\begin{figure}
%  \begin{center}
%      \input{figures/Example3Adap}
%  \end{center}
%  \caption{Example from with .}
%  \label{fig:Example3Adap}
%\end{figure}
%--------------------------------------------------------------------------------------------------------
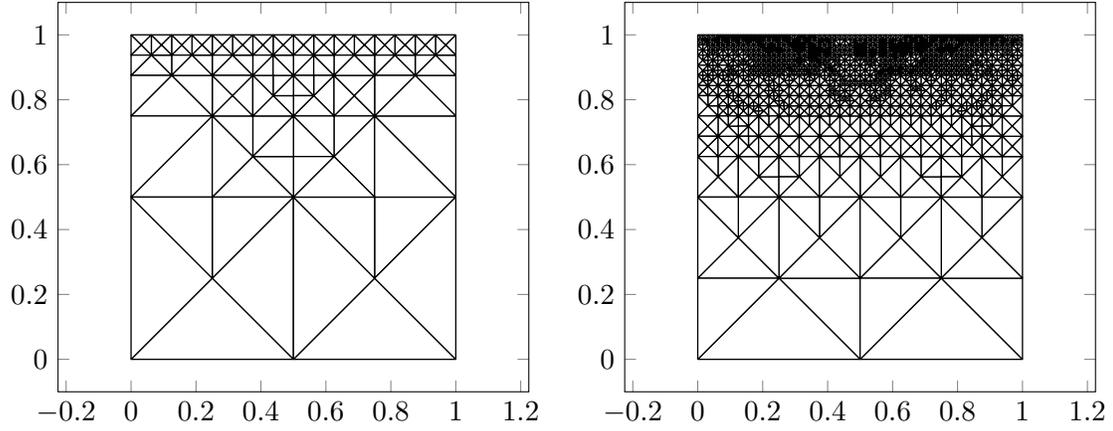
\begin{figure}
  \begin{center}
      \begin{tikzpicture}
\begin{axis}[%hide axis,
width=0.49\textwidth,
    axis equal,
%    width=0.49\textwidth,
%    xlabel={$x$},
%    ylabel={$y$},
]%, axis equal image = true]

\addplot[patch,color=white,%mesh, 
faceted color = black, line width = 0.5pt,
patch table ={figures/Example3_el1.dat}] file{figures/Example3_co1.dat};
%\addplot[mark=*,color=gray,only marks] table[x index=0,y index=1] {figures/Example1_co1.dat};
\end{axis}
\end{tikzpicture}
      \begin{tikzpicture}
\begin{axis}[%hide axis,
width=0.49\textwidth,
    axis equal,
%    width=0.49\textwidth,
%    xlabel={$x$},
%    ylabel={$y$},
]%, axis equal image = true]

\addplot[patch,color=white,%mesh, 
faceted color = black, line width = 0.5pt,
patch table ={figures/Example3_el2.dat}] file{figures/Example3_co2.dat};
%\addplot[mark=*,color=gray,only marks] table[x index=0,y index=1] {figures/Example1_co2.dat};
\end{axis}
\end{tikzpicture}
  \end{center}
  \caption{Adaptive meshes for Experiment 3 with $142$ resp $3366$ elements.}
  \label{fig:Example3AdapMesh}
\end{figure}
%--------------------------------------------------------------------------------------------------------
\subsection{Experiment 4}
We use $u_0=0$, $\alpha=0, \beta=1$ and the desired state
\begin{align*}
  u_{T,d}(x) = 
  \begin{cases}
    0.1 - |x-0.5| & 0.4\leq x \leq 0.6,\\
    0 & \text{ else}.
  \end{cases}
\end{align*}
Note that $u_{T,d}\in H^1_0(\Omega)$ but $\Delta u_{T,d}\notin H^1(\Omega)$. Consequently we observe
a reduced convergence rate for $\wilde\eta_h$ and in the case of uniform mesh-refinement, cf. Figure~\ref{fig:Example4},
while $\osc_h$ converges with optimal rate. For adaptive mesh-refinement, Figure~\ref{fig:Example4} shows that
$\wilde\eta_h$ and $\osc_h$ converge at a better rate. In Figure~\ref{fig:Example4Eta}, we plot only $\eta_h$ for uniform as well
as adaptive mesh refinement as well as the observed convergence rates. In Figure~\ref{fig:Example4AdapMesh}
we plot intermediate adaptive meshes with $99$ respectively $2779$ number of elements.

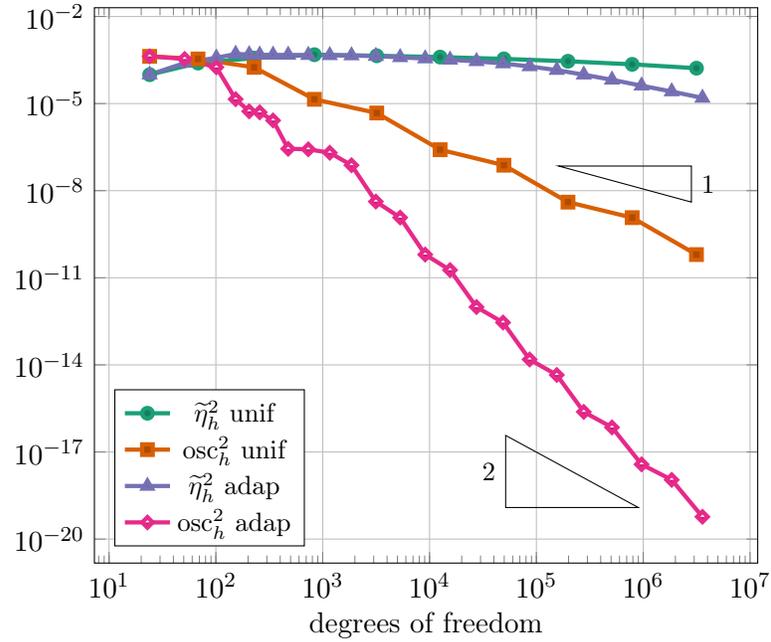
\begin{figure}
  \begin{center}
      \begin{tikzpicture}
\begin{loglogaxis}[
width=0.65\textwidth,
%cycle list name=black white,
%cycle list name=exotic,
cycle list/Dark2-6,
% combine it with ’mark list*’:
cycle multiindex* list={
mark list*\nextlist
Dark2-6\nextlist},
every axis plot/.append style={ultra thick},
xlabel={degrees of freedom},
grid=major,
legend entries={\small $\widetilde \eta_h^2$ unif, \small $\osc_h^2$ unif,
\small $\widetilde \eta_h^2$ adap, \small $\osc_h^2$ adap},
legend pos=south west,
]
%\addplot table [x=ndofs,y=errsqfL2] {figures/Example2Unif.dat};
%\addplot table [x=ndofs,y=errsquL2] {figures/Example2Unif.dat};
\addplot table [x=ndofs,y=est] {figures/Example4Unif.dat};
\addplot table [x=ndofs,y=osc] {figures/Example4Unif.dat};
\addplot table [x=ndofs,y=est] {figures/Example4Adap.dat};
\addplot table [x=ndofs,y=osc] {figures/Example4Adap.dat};

\logLogSlopeTriangle{0.9}{0.2}{0.65}{1}{black}{{\small $1$}};
\logLogSlopeTriangleBelow{0.82}{0.2}{0.1}{2}{black}{{\small $2$}};
\end{loglogaxis}
\end{tikzpicture}
  \end{center}
  \caption{Uniform and adaptive mesh-refinement for Experiment 4.}
  \label{fig:Example4}
\end{figure}
%--------------------------------------------------------------------------------------------------------
\begin{figure}
  \begin{center}
      \begin{tikzpicture}
\begin{loglogaxis}[
width=0.65\textwidth,
%cycle list name=black white,
%cycle list name=exotic,
cycle list/Dark2-6,
% combine it with ’mark list*’:
cycle multiindex* list={
mark list*\nextlist
Dark2-6\nextlist},
every axis plot/.append style={ultra thick},
xlabel={degrees of freedom},
grid=major,
legend entries={\small $\widetilde \eta_h^2$ unif, \small $\widetilde \eta_h^2$ adap},
legend pos=south west,
]
%\addplot table [x=ndofs,y=errsqfL2] {figures/Example2Unif.dat};
%\addplot table [x=ndofs,y=errsquL2] {figures/Example2Unif.dat};
\addplot table [x=ndofs,y=est] {figures/Example4Unif.dat};
\addplot table [x=ndofs,y=est] {figures/Example4Adap.dat};

\logLogSlopeTriangle{0.9}{0.2}{0.75}{0.16}{black}{{\small $0.16$}};
\logLogSlopeTriangleBelow{0.82}{0.2}{0.1}{0.75}{black}{{\small $0.75$}};
\end{loglogaxis}
\end{tikzpicture}
  \end{center}
  \caption{Uniform and adaptive mesh-refinement for Experiment 4, only $\eta_h$.}
  \label{fig:Example4Eta}
\end{figure}
%--------------------------------------------------------------------------------------------------------
%\begin{figure}
%  \begin{center}
%      \input{figures/Example4Unif}
%  \end{center}
%  \caption{Example from with .}
%  \label{fig:Example4Unif}
%\end{figure}
%--------------------------------------------------------------------------------------------------------
%\begin{figure}
%  \begin{center}
%      \input{figures/Example4Adap}
%  \end{center}
%  \caption{Example from with .}
%  \label{fig:Example4Adap}
%\end{figure}
%--------------------------------------------------------------------------------------------------------
\begin{figure}
  \begin{center}
      \begin{tikzpicture}
\begin{axis}[%hide axis,
width=0.49\textwidth,
    axis equal,
%    width=0.49\textwidth,
%    xlabel={$x$},
%    ylabel={$y$},
]%, axis equal image = true]

\addplot[patch,color=white,%mesh, 
faceted color = black, line width = 0.5pt,
patch table ={figures/Example4_el1.dat}] file{figures/Example4_co1.dat};
%\addplot[mark=*,color=gray,only marks] table[x index=0,y index=1] {figures/Example1_co1.dat};
\end{axis}
\end{tikzpicture}
      \begin{tikzpicture}
\begin{axis}[%hide axis,
width=0.49\textwidth,
    axis equal,
%    width=0.49\textwidth,
%    xlabel={$x$},
%    ylabel={$y$},
]%, axis equal image = true]

\addplot[patch,color=white,%mesh, 
faceted color = black, line width = 0.5pt,
patch table ={figures/Example4_el2.dat}] file{figures/Example4_co2.dat};
%\addplot[mark=*,color=gray,only marks] table[x index=0,y index=1] {figures/Example1_co2.dat};
\end{axis}
\end{tikzpicture}
  \end{center}
  \caption{Adaptive meshes for Experiment 3 with $99$ resp $2779$ elements.}
  \label{fig:Example4AdapMesh}
\end{figure}
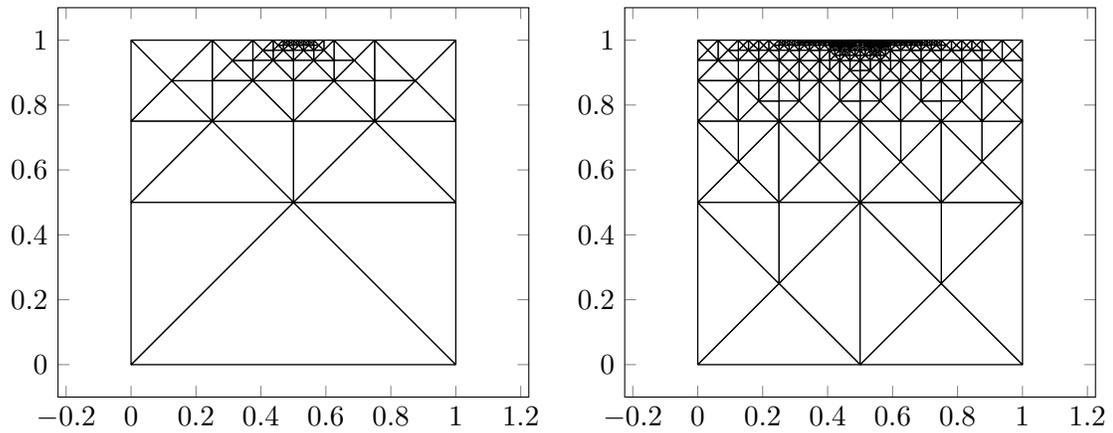
%--------------------------------------------------------------------------------------------------------
\bibliographystyle{abbrv}
\bibliography{literature}
%------------------------------------------------------
\end{document}